\documentclass[reqno, 11pt]{amsart}

\usepackage{srcltx}
\usepackage[mathscr]{eucal}
\usepackage{amscd}
\usepackage{color}
\usepackage{amsfonts}
\usepackage{amsmath, amsthm, amssymb, ,empheq}
\usepackage{latexsym}
\usepackage{enumerate}
\usepackage{graphics}
\usepackage{epsfig}
\usepackage{xcolor}
\usepackage{graphicx}
\usepackage{epstopdf}
\usepackage{dsfont}
\usepackage{tikz}
\usepackage{nicefrac}

\usepackage{multicol}
\usepackage{enumerate}
\usepackage{hyperref}
\usepackage{siunitx}
\usepackage{todonotes}
\usepackage{cleveref}
 \newtheorem{thm}{Theorem}[section]
 
 \newtheorem{lem}[thm]{Lemma}

 \newtheorem{defn}[thm]{Definition}
 \newtheorem{rem}[thm]{Remark}
 
 \numberwithin{equation}{section}

\def\R{{\Bbb R}}

\def\R{{\mathbb R}}
\def\N{{\Bbb N}}

\def\bge{\begin{eqnarray}}
\def\bgee{\begin{eqnarray*}}
\def\ege{\end{eqnarray}}
\def\egee{\end{eqnarray*}}

\newcommand{\ee}{{\mathrm{e}}}

\newcommand{\e}{\varepsilon}

\newcommand{\dive}{\operatorname{div}}
\newcommand{\di}[1]{\,\mathrm{d}#1}

\newcommand{\reach}{\operatorname{reach}}
\newcommand{\dist}{\operatorname{dist}}

\def\be{\begin{equation}}
\def\ee{\end{equation}}
\def\bse{\begin{subequations}}
\def\ese{\end{subequations}}
\def\bge{\begin{eqnarray}}
\def\bgee{\begin{eqnarray*}}
\def\ege{\end{eqnarray}}
\def\egee{\end{eqnarray*}}

\newcommand{\bfr}{\bf\color{red}}

\newcommand{\clr}{\color{red}}

\usepackage{lineno} 

\date{\today}

\title[Colloids deposition in porous media]
{A mathematical model  for colloids deposition in porous media combined with a moving boundary at the  microscale: {S}olvability and numerical simulation}

\author{Christos Nikolopoulos}
\address{Department of Mathematics, University of Aegean,
GR-83200 Karlovassi, Samos, Greece}

\author{Michael Eden}
\address{Department of Mathematics, University of Regensburg, Germany}

\author{Adrian Muntean}
\address{Department of Mathematics and Computer Science,
Karlstad University, Sweden}

\date{\today}

\begin{document}
\maketitle


\begin{abstract}
{\clr} We study a reaction-diffusion model posed on two distinct spatial scales that accounts for diffusion, aggregation, fragmentation, and deposition of populations of colloidal particles within a porous material.
In this model, the macroscopic transport of the particles is described by an effective equation whose transport coefficients are determined by cell problems posed on the underlying pore scale.
 The internal pore geometry can change over time due to deposition or detachment of colloidal particles.
We represent the evolving microstructure as solid cores whose phase boundaries can grow or shrink over time.
As deposition progresses, neighbouring growing cores may come into contact, leading to local clogging of the pore space.
We investigate how such evolving microstructures influence the effective transport and storage properties of porous layers.
We establish basic analytical results concerning the weak solvability of the resulting multiscale evolution problem, which takes the form of a strongly non-linear parabolic system, in the non-clogging regime. 
For the numerical approximation of weak solutions we propose a two-scale finite element discretization. Numerical experiments illustrate how local clogging affects the effective dispersion tensor and quantify the resulting trade-off between transport efficiency and storage capacity. 
\vskip0.5cm
 {\bf MSC2020:} 35K61, 65N30, 35B27, 76S05, 80M40
 \vskip0.5cm
{\bf Keywords:} Colloidal transport and deposition, reactive porous media, weak solutions to strongly nonlinear parabolic systems, two-scale FEM approximation, clogging.
\end{abstract}

\maketitle
\vspace{0.5in}

\section{Introduction}

Functional porous materials (active filters, biological membranes, etc.) derive their macroscopic properties from coupled physical processes operating across multiple length and time scales. Pore-scale mechanisms, including diffusion, flow, ionic transport, chemical reactions, deposition, or adsorption, interact with microstructures such as phase domains and interfaces (phase boundaries), giving rise to emergent macroscopic responses.
Such interactions are hard to control as usually the microscopic information is not evenly distributed, and hence, geometric defects can be present (either because they were manufactured in the production stage or because they appear due to clogging, or some other type of localized damage).
A predictive understanding of such materials requires modeling frameworks that explicitly connect fine-scale physics to effective material behavior.
This is precisely where one major knowledge gap is.
What is currently unclear (and sometimes controversial) is precisely how the coupling between the scales must be in general achieved. A better understanding of such microscopic-macroscopic transfer of information would allow scientists to better engineer the functionality of their materials.

Specifically, if the material's length scales of interest are strongly separated, then, for specific situations,  
variations of homogenization-based  techniques are able to unveil the structure of the expected upscaled model, 
if one is provided with suitable descriptions of the microscopic geometries as well as with the set of pore-scale mechanisms that are relevant for the physical situation at hand. Even in this somewhat simpler case, it is often difficult (or out of reach) to incorporate all the wanted information in a couple of effective (transport, sorption, etc.) coefficients (cf. e.g. \cite{Gruy,Ivan}).
In that case, one hopes that computationally-feasible multiscale models are a good remedy (see \cite{Showalter_Oberwolfach} for a nice introduction to the topic of models with distributed microstructures and \cite{otero_multiscale_2018} for an overview of such models from an engineering point of view).
This direction of thinking naturally leads to a couple of interesting fundamental questions within the framework of partial differential equations posed on multiple scales, like their solvability, large-time behavior of their solutions, or the computability of the said solutions.

The problem setting we have here in mind is described in section \ref{PuwS}. The structure of these equations and the microscopic-macroscopic coupling are derived formally in the work \cite{MC20}  by combining locally-periodic homogenization arguments with matched asymptotics; see also \cite{WiedemannPeter2023} for an alternative rigorous upscaling.
The existence of weak solutions to the two-scale model and their numerical approximation for a selection of fixed microstructures are discussed in our more recent works \cite{EdenNikolopoulosMuntean22, NEM23}, with the results restricted to the 2D case while the porous microstructures are represented by disks. 
Such prior investigations and also the mathematical analysis shown in this work are focused on the non-clogging regime -- the solid cores might get very close but are not yet fully in contact with the cell boundaries.
In our numerical simulations, however, we specifically look at the case of clogging: as microstructures grow due to the deposition of populations of colloidal particles, local clogging occurs, that is neighbouring cores may touch each other (which is represented by the solid cores coming into contact with the cell boundaries).
Within this work, we investigate how distributions of evolving microstructures influence the overall transport and storage properties of porous layers.

\begin{figure}[hbtp]
\centering
\includegraphics[scale=.8]{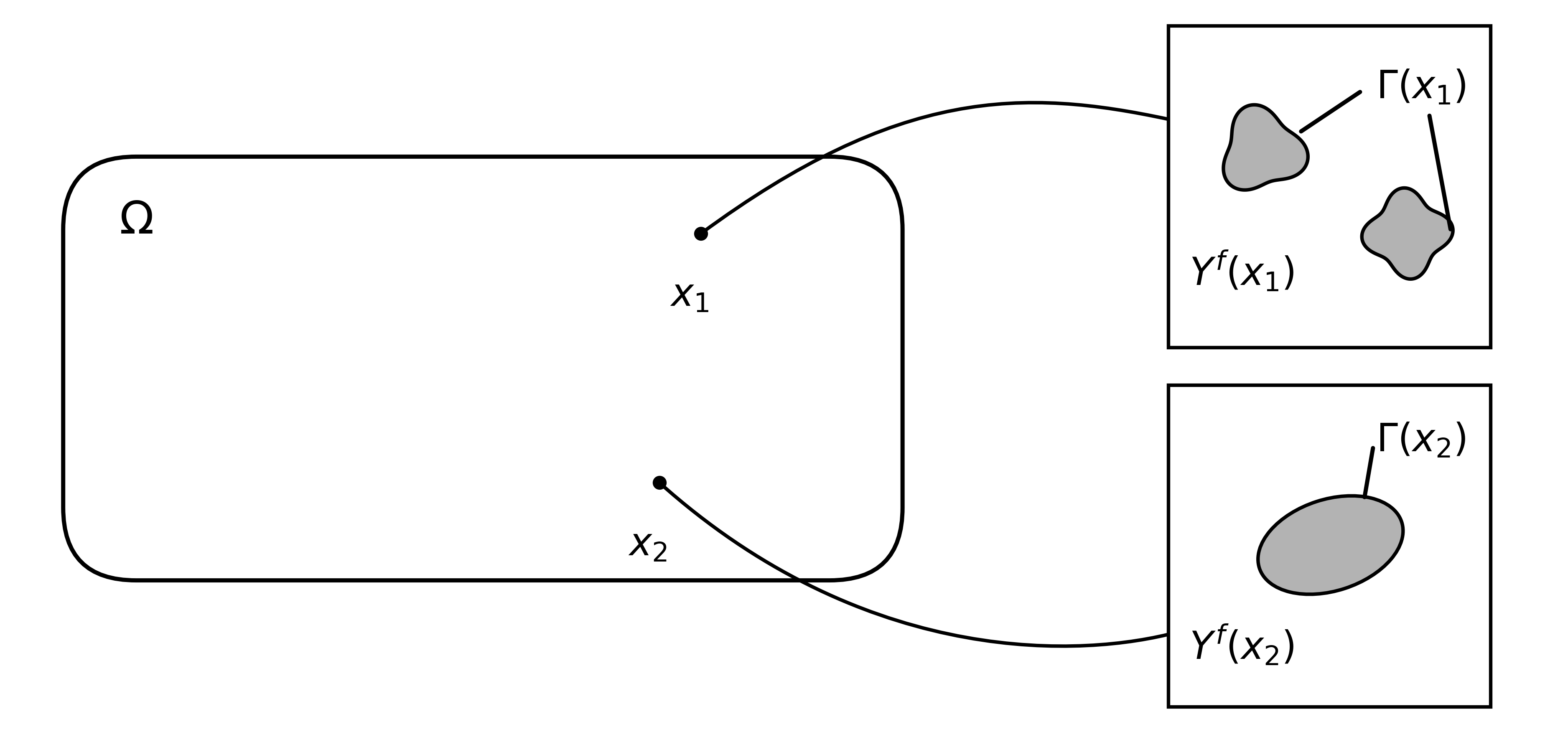}
 \caption{Schematic representation of the two-scale geometry with the macroscopic domain $\Omega$ and the different perforated microstructures $Y^f(x_1)$ and $Y^f(x_2)$ associated with points $x_1,x_2\in\Omega$.
 The precise working assumptions for the microscopic geometries are given within section \ref{PuwS}.}
 \label{pic}
\end{figure}









Our initial motivation was to understand the behaviour of colloidal particles in heterogeneous soils \cite{Krehel,Icardi, Chen}, but very  similar settings appear also in other scenarios like self-healing concrete \cite{Etelvina}, salt crystallization \cite{Bologna}, 
drug-delivery systems
\cite{drug, Giulia}, or, in general,  porous filters 
\cite{Bruna_JFM}. The main objective of the present study is to describe numerically the motion of the freely moving microscopic interfaces using an equation for displacing the level sets.
Our key contributions are two fold:
\begin{itemize}
\item We extend our earlier mathematical analysis from the 2D case to arbitrary dimensions while also allowing for more complex, non-homogeneously distributed microstructures.
In addition, we provide a result (Theorem \ref{thm.uniqueness}) clarifying the  uniqueness of solutions to our model  which was missing in the previous works;
\item Our implementation of the FEM approximation can easily handle a large variations of 2D microstructures (with various varying inner cores). In particular, it allows to investigate clogging mechanisms. 
\end{itemize}

The work is structured as follows: The model we have in mind is described in section \ref{PuwS}. Section \ref{analysis} contains a brief discussion of the model's solvability in terms of suitable weak solutions.  The computability of this class of solutions is the subject of section \ref{numerics}. In this section, we also illustrate the expected solution behavior and emphasize the effect of clogging. We summarize our findings in section \ref{conclusion}. We indicate here also a couple of possible directions for further research in the same context.

\section{Setup, model, and solution concept}\label{PuwS}

We consider a macroscopic domain $\Omega\subset \mathbb{R}^n$, $n=1,2,3$, and use as the macroscopic spatial variable $x\in\Omega$.
We observe the physico-chemical processes taking place within $\Omega$ during the interval $S_T=(0,T)$ with $T>0$.
As time variable we use $t\in S_T$. 
In the framework of this manuscript, the macroscopic domain is assumed to be structured such that, at each macroscopic point $x\in \Omega$, the observer can access an underlying scale $Y\subset \mathbb{R}^d$ (referred here as microscopic\footnote{This is precisely how our model is able to account for the heterogeneity of the internal porous structure of the porous material.}).
For simplicity, we consider the standard unit cell as microscopic domains, i.e., cells of type $Y=(0,1)^3$, with $y\in Y$ denoting the microscopic space variable.
Inside such a cell, we consider a solid core (solid skeleton of the underlying material) upon which colloidal particles tend to deposit causing uniform volume expansion of the solid core.

To make this more precise, we fix a $x$-parametrized family of reference solid configuration $\{Y^s(x)\}_{x\in\Omega}$ with $\overline{Y^s(x)}\subset Y:=(0,1)^d$ and with boundaries $\Gamma(x):=\partial Y^s(x)$.
For $\sigma\in[-\sigma^*,\sigma^*]$ we define the offset surfaces
\[
\Gamma(x,\sigma):=\{y\in Y\ :\ \dist(y,\Gamma(x))=\sigma\},
\]
where $\dist(y,\Gamma(x))$ denotes the signed distance of a point $y\in Y$ to $\Gamma(x)$ (negative inside $Y^s(x)$).
The corresponding solid and fluid regions are
\[
Y^s(x,\sigma):=\{y\in Y\ :\ \dist(y,\Gamma(x))<\sigma\},
\qquad
Y^f(x,\sigma):=Y\setminus\overline{Y^s(x,\sigma)}.
\]
Here, we want to emphasize that $Y^s(x)$ may not be connected and may consist of several inclusions whose number and shapes vary significantly with $x$.
We refer the reader to Figure~\ref{pic} for a sketch of the two–scale geometry we have in mind and to Figure~\ref{growth} for a schematic representation of the involved microstructure.
For the precise setup including all the mathematical assumptions on the reference geometry, we refer to \cref{analysis}.

\begin{figure}[h!]
\centering
\includegraphics[scale=.5]{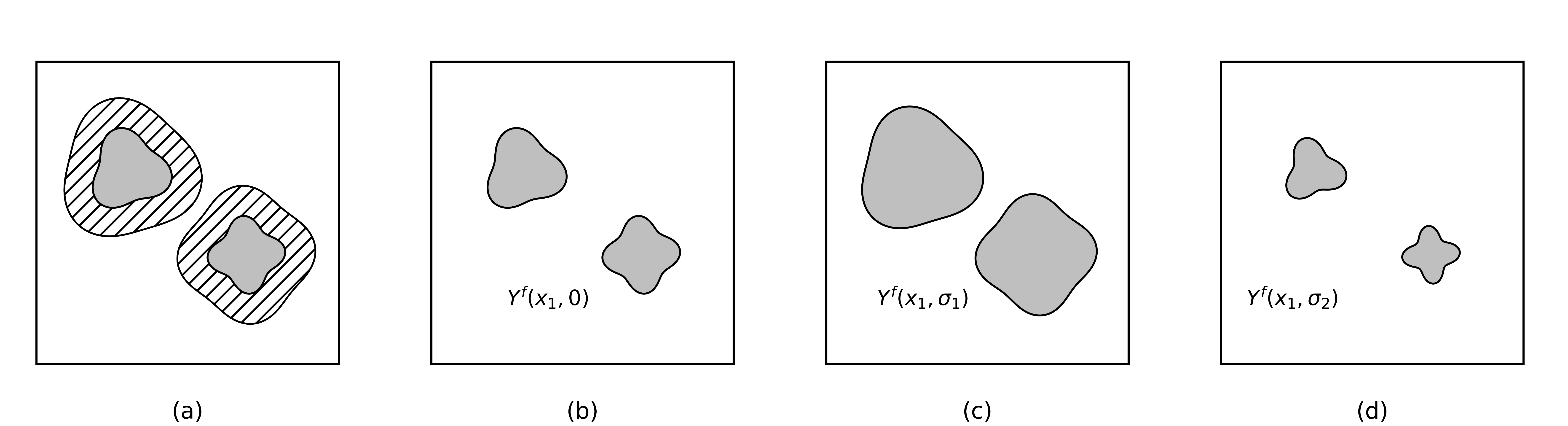}
 \caption{A schematic representation of the growth setup we have in mind. Figure $(a)$ shows the tubular neighborhood of thickness at least $\sigma^*$ for one possible microstructure (linked to $x_1\in\Omega$). Figure $(b)$ depicts the initial setup ($\sigma=0$), $(c)$ the geometry after growth ($\sigma_1>0$), and $(d)$ after shrinkage ($\sigma_2<0$).}
 \label{growth}
\end{figure}

With these notations at hand, we can now describe  the reaction-diffusion system we have in mind, together with its distributed microstructure.
A simpler form of this  model was originally derived in \cite{MC20} and further analyzed in \cite{EdenNikolopoulosMuntean22} and  
\cite{NEM23}. We refer the reader to these works for additional details regarding the problem and its properties.

Already at this early stage of discussion, an important clarification is needed regarding the physiochemical aggregation-coagulation process taking place inside the porous medium and the way we describe it mathematically here. We let $N$ be a given natural number indicating the maximal allowed {\em size} of an aggregate of colloid particles. 
The word {\em size} refers here to the number of primary particles making up the aggregate. The fact that $N<\infty$ indicates that, for any practical situation, there is a maximal size that is reached. For instance, for colloidal species involved in transport and chemical reactions in soils it is not unusual that $N$ is rather small, say around 10-40, a limiting factor for the spread of the particle size distribution being the gravity force; see e.g. \cite{Krehel, Kai,Zech}.

For each $i=1,...,N$, let $u_i\colon S_T\times\Omega\to[0,\infty)$ (we set $u=(u_1,...,u_N)$) denote the molar concentration of aggregates of size $i$  that can be found at point $x\in\Omega$ and time $t\in S$.
Moreover, we take the function $v\colon S_T\times\Omega\to[0,\infty)$ to represent the mass density of absorbed material (mass that is present in the system but currently does not take part in the diffusion and agglomeration processes); this mass can dissolve again allowing colloidal populations to re-enter the pore space. The processes of absorption and desorption is modeled in this context via an Robin-type exchange term agreeing with the linear form of Henry's law (see e.g. \cite{Helmig97,Krehel}) with positive exchange coefficients $a_i$ and $b_i$:
$$
A(x,\sigma) (a_i u_i-b_iv).
$$
Here, the function $A(x,\sigma)=\frac{|\Gamma(x,\sigma)|}{|Y^f(x,\sigma)|}$ is the ratio of the pore space boundary length over the effective volume fluid space and it accounts for the facts that $(a)$ a larger surface area allows for a more efficient exchange and $(b)s$ a smaller fluid area means that any exchange has a stronger effect on the concentration.
For each $i=1,...,N$, $\nicefrac{a_i}{b_i}$ is the equilibrium partition coefficient.
This relationship involving the exchange coefficient $A(x,\sigma)$ is justified via the upscaling procedure performed in \cite{MC20}.

The aggregation processes taking place inside the pore space of the medium is described by a truncated  variant of the {\em Smoluchowski}
 formulation (\cite{Aldous}, \cite{NEM23}) given here by
\begin{equation}\label{smoluchowski}
R_i(u):=\frac{1}{2}\sum_{j+l=i}\gamma_{jl}u_ju_l-u_i\sum_{j=1}^{N-i}\gamma_{ij}u_j,
\end{equation}
with positive coefficients $\gamma_{jl}$.
The index $i$ refers to the $i$th population ($i=1,\dots,N$). 

The first sum in (\ref{smoluchowski}) accounts for the formation via coagulation under the assumption that colloids of size $i$ can be formed when two smaller colloids of sizes $j$ and $l$ with the property $j+l=i$ meet.
In particular, the first sum is zero in $R_1$ (particles of size 1 can not form via coagulation) and $\frac{1}{2}\gamma_{11}u_1^2$ in $R_2$ (particles of size 2 can only form via coagulation of two size-1 colloids).
The second sum accounts for the loss of $i$-sized colloids by coagulating with a different colloid of size $j$ to form a new one of size $i+j\leq N$. 
The second sum is therefore empty in $R_N$. Note that this is conceptually very similar to viewing the overall production process as a chain of second order chemical reactions. 
It is important to observe that in the context of porous media the colloidal populations involve a finite size chain of the cluster, i.e. there will be a population of $N$-mers where $N$ takes the maximum cluster size (see e.g.  \cite{Krehel}, \cite{NEM23}, and references cited therein).

Summarizing the discussion, the diffusion-reaction system for the different $N$ populations of dissolved aggregates is then given via
\begin{subequations}
\begin{empheq}[left=(P_u)\empheqlbrace]{alignat=2}
\partial_t u_i-\dive(D_i(x,\sigma)\nabla u_i) &=R_i(u)-A(x,\sigma)(a_iu_i- b_iv) &\quad& \text{in } S_T\times\Omega, \label{overall-u1_in}\\
-D_i(x,\sigma)\nabla u_i\cdot n&=0&\quad&\text{on } S_T\times\partial\Omega, \label{overall-u2_in}\\
u_i(0)&=u_{i0}&\quad&\text{in } \Omega. \label{overall-u3_in}
\end{empheq}
We refer to this model component as ($P_u$).

The effective diffusion matrix $D_i(x,\sigma)\in{\mathbb{R}}^{d\times d}$ is not just an inhomogeneous coefficient which depends on time and space but rather has to be calculated over the changing microstructure.
More precisely, it is calculated using solutions $w_{k}$ ($k=1,...,d$) of the cell problems
\begin{empheq}[left=(P_w)\empheqlbrace]{alignat=2}
-\Delta_y w_{k} &= 0 &\quad& \text{in } Y^f(x,\sigma), \label{overall-w1_in}\\
-\nabla_y w_{k}\cdot \nu(y) &= e_k\cdot \nu(y) &\quad& \text{on } \partial Y^s(x,\sigma), \label{overall-w2_in}\\
y &\mapsto w_k(x,\sigma,y) & \quad& \text{is $Y$-periodic}\label{overall-w3_in}\\
\int_{Y^f(x,\sigma)}w_k(y)\di{y}&=0\label{overall-w4_in}.
\end{empheq}
We then have 
\[(D_i)_{jk}(x,\sigma)=d_i\phi(x,\sigma)\int_{Y^f(x,\sigma)}(\nabla_y w_{k}(x,\sigma,y)+e_{k})\cdot e_j\di{z},\]
where $\phi(x,\sigma):=|Y^f(x,\sigma)|$ denotes the porosity of the medium, $d_i>0$ are diffusivity constants, and $e_j$ denote the unit vectors.
The structure of both the cell problem and of the effective diffusivity are derived in \cite{MC20} using asymptotic homogenization techniques adapted to the case of  locally-periodic media.
We also point out that with homogenous fluid phase and without growth/shrinkage on the microstructure (so when $\sigma=0$), this structure of the $D_i$ is the classical homogenization result for diffusion in porous media.

The evolution of $v$ is governed by an ODE parametrized in $x\in\Omega$ via
\begin{empheq}[left=(P_v)\empheqlbrace]{alignat=2}
\partial_t v &= \sum_{i=1}^N (a_i u_i -  b_i v) &\quad& \text{in } S_T\times\Omega, \label{overall-v1_in}\\
v(0) &= v_0 &\quad& \text{in } \Omega. \label{overall-v2_in}
\end{empheq}
Finally, we describe the growth/shrinkage of the interface $\Gamma(x,\sigma)$ via the dynamics of the interface offset $\sigma$.
The main assumption is that this dynamics is proportional to the overall mass deposition/dissolution rate.
This leads to the following kinematic law for the interface offset:
\begin{empheq}[left=(P_\sigma)\empheqlbrace]{alignat=2}
\partial_t\sigma
&=\alpha_v \sum_{i=1}^N (a_i u_i -  b_i v)
&\quad& \text{in } S_T\times\Omega \label{overall-S1_in},\\
\sigma(0)&=0&\quad& \text{in } \Omega\label{overall-S2_in}
\end{empheq}
\end{subequations}
where $\alpha_v>0$ is a proportionality coefficient.
This is a direct modification from growing balls in \cite{EdenNikolopoulosMuntean22}, where the same equation was posed for radial evolution, to more general shapes of solid inclusions.

In summary, the overall problem we are considering in this work consists of subproblems $(P_u)$ (\cref{overall-u1_in,overall-u2_in,overall-u3_in}), $(P_w)$ (\cref{overall-w1_in,overall-w2_in,overall-w3_in,overall-w4_in}), $(P_v)$ (\cref{overall-v1_in,overall-v2_in}), and $(P_\sigma)$ (\cref{overall-S1_in,overall-S2_in}).

\subsection{Assumptions and solution concept}
We start by formulating the precise mathematical assumptions specifically in regards to the setup of the microgeometries.

\begin{enumerate}
    \item[\textbf{(A1)}]\label{as:geometry}
        \textit{The setup of the geometry:} Let $\Omega\subset\R^n$ be a bounded Lipschitz domain and $S_T=(0,T)$ for some $T>0$ be the time interval.
        For all $x\in\Omega$, let $Y^s(x)\subset Y=(0,1)^d$ be the initial solid inclusions, $Y^f(x):=Y\setminus Y^s(x)$ the initial micro fluid space, and $\Gamma(x)=\partial Y^s(x)$ the internal interface.
        We assume:
        \begin{itemize}
            \item[$(i)$] For every $x\in\Omega$, $Y^s(x)$ is the union of $k(x)\in\N$ pairwise-disjunct $C^{3}$-domains $Y_j^s(x)$, i.e. $Y^s(x)=\bigcup_{i=1}^{k(x)}Y_j^s(x)$.
            \item[$(ii)$] The solid components are uniformly separated from each other and from the boundary of $Y=(0,1)^d$, i.e., there is $\delta>0$ such that
            \[
            \inf_{x\in\Omega}\min\{\min_{i\neq j}d(Y_i^s(x),Y_j^s(x)),\min_{i}d(Y_i(x);\partial Y)\}\geq \delta.
            \]
            \item[$(iii)$] We assume that there is a uniform tubular neighborhood size $\sigma^*>0$ where
            \begin{multline*}
            \hspace{2cm}\sigma^*:
            =\frac{1}{2}\inf_{x\in\Omega}\sup\{r>0 \ |\ \forall y\in Y \ \text{with}\ d(y,\Gamma(x))<r,\\ \ \exists! z\in \Gamma(x) \ \text{s.t} \ d(y,z)=d(y,\Gamma(x))\}.
            \end{multline*}
            In the terminology of geometric measure theory (e.g., \cite{Federer1969}), the family $\{\Gamma(x)\}_{x\in\Omega}$ has uniformly positive \textit{reach} inside $Y$.
        \end{itemize}
    \item[\textbf{(A2)}]\label{as:coefficients}
        \textit{Coefficients:} Let $N>0$ and $a_i, b_i,d_i,\gamma_{i,j},\alpha_v>0$ for all $i,j=1,...,N$.
    \item[\textbf{(A3)}]\label{as:initial-conditions}
        \textit{Initial conditions:} We assume $u_0=(u_{10},...,u_{iN})\in L^\infty(\Omega)^N$ and $v\in L^\infty(\Omega)$ with $u_{i0}\ge0$ ($i=1,...,N$) and $v\ge0$ almost everywhere in $\Omega$. 
\end{enumerate}
Please note that Assumptions $A1.(i)+(ii)$ imply $\overline{Y^s(x)}\subset Y$ as well as  $k^*:=\sup_{x\in\Omega}k(x)<\infty$.
It is also clear that the constraints in Assumption $(A1)$ enforce a limit on the size of the interface $\Gamma(x)$ and one can show that $|\Gamma(x)|\le 2^{d-2}\left(\min\{\sigma^*,\nicefrac\delta4\}\right)^{-1}$.\footnote{However, this particular bound is not important for our setting, we only need the fact that there is an upper bound.}

Now, using the signed distance function, we set
\[
Y^s(x,\sigma):=\{y\in Y\ :\ d_x(y)<\sigma\},\quad Y^f(x,\sigma)=Y\setminus Y^s(x,\sigma),\quad\Gamma(x,\sigma)=\partial Y^s(x,\sigma)
\]
for all $\sigma\in[-\sigma^*,\sigma^*]$.
Please note that with Assumption $A1.(iii)$, $Y^s(x,\sigma)$ still has $k(x)$ pairwise disjunct components and still satisfies $\overline{Y^s(x,\sigma)}\subset Y$.
In addition, it also ensures that there is $\lambda\in(0,\nicefrac12)$ such that $\lambda\le|Y^s(x,\sigma)|\le1-\lambda$ for all $x\in\Omega$ and $\sigma\in[-\sigma^*,\sigma^*]$.
Similarly, $\lambda\le|Y^f(x,\sigma)|\le1-\lambda$.
We also have the characterization
\[
\Gamma(x,\sigma):=\partial Y^s(x,\sigma)=\{d_x=\sigma\}
\]
and there are $\gamma_1,\gamma_2>0$ such that $\gamma_1\le|\Gamma(x,\sigma)|\le\gamma_2$.

We set $Q(\sigma)=\bigcup_{x\in\Omega}\{x\}\times Y(x,\sigma)$ with $Q:=Q(0)$ denoting the initial micro-macro geometry.

The following table gives an overview over the non-linear coefficients that are included in our model. 
\begin{table}[h]
\begin{tabular}{|c|c|c|}
\hline
Symbol  						& Quantity 										& Expression	 \\ 
\hline
$D_i(x,\sigma)\in\R^{d\times d}$						& effective diffusivity				& $(D_i)_{jk}=d_i\phi(x,\sigma)\int_{Y^f(x,\sigma)}(\nabla_y w_{k}(x,\sigma,\cdot)+e_{k})\cdot e_j\di{z}$     \\ 
\hline
$\phi(x,\sigma)\in(0,1)$						& porosity					 					& $|Y^f(x,\sigma)|$\\
\hline
$A(x,\sigma)>0$						&specific surface area					 					& $A=\frac{|\Gamma(x,\sigma)|}{|Y^f(x,\sigma)|}$\\
\hline
\end{tabular}
\caption{
Notation for the geometric quantities and their dependence on the offset parameter $\sigma$.
}
\end{table}

For convenience of writing,  we introduce the Bochner space
\[
\mathcal{W}(S_T):=\{u\in L^2(S_T;H^1(\Omega))^N\ :\ \partial_tu\in L^2(S_T\times\Omega)^N\}.
\]
We use the $\#$-index to denote periodic boundary condition as in
\[
H^1_\#(Y^f(x,\sigma))=\{w\in H^1_{loc}(\R^d)\ :w\in H^1(Y^f(x,\sigma)), \ w(y)=w(y+e_k)\ \text{a.e.~for all}\ k=1,...,d\}
\]

\begin{defn}[Weak solutions]\label{def:weak_solution}
A weak solution is a set of functions $(u,v,\sigma)$ with the regularity
\begin{align*}
u\in \mathcal{W}(S_T)\cap L^\infty(\Omega\times S_N))^N,\quad \sigma\in L^\infty(\Omega;W^{1,\infty}(S_T)),\quad
v\in L^\infty(\Omega;W^{1,\infty}(S_T))
\end{align*}
that satisfies the following conditions:
\begin{itemize}
    \item[$(i)$] It holds $-\sigma^*\le\sigma(x,t)\le\sigma^*$.
    \item[$(ii)$] The diffusivity of the $i$-nomeres $D_i\in L^\infty(\Omega\times S_T)^{d\times d}$ is given by ($j,k=1,...,d$)
    \[
    (D_i(x,\sigma(t,x))_{jk}=d_i\phi(x,\sigma(t,x))\int_{Y^f(x,\sigma(t,x))}(\nabla w_{k}(x,\sigma(x,t),y)+e_{k})\cdot e_j\di{z}.
    \]
    Here, for any $\sigma\in[-\sigma^*,\sigma^*]$ and $k=1,...,d$ the cell functions $w_{k}(x,\sigma,\cdot)\in H^1_\#(Y(x,\sigma))$ are the unique zero-average solutions to the weak form
    \[
    \int_{Y^f(x,\sigma)}\nabla w_k(x,\sigma,y)\cdot\nabla\xi(y)\di{y}=\int_{\Gamma(x,\sigma)}\xi(y)e_k\cdot n\di{\gamma}
    \]
    for all $\xi\in H^1_\#(Y(x,\sigma))$.
    \item[$(iii)$] The weak forms
    \begin{align*}
    \langle\partial_t u_i,\phi\rangle_{H^1(\Omega)^*}-(D_i(\sigma)\nabla u_i,\nabla\phi)_{L^2(\Omega)} &=(R_i(u),\phi)_{L^2(\Omega)}-(A(\sigma)(a_iu_i- b_iv),\phi)_{L^2(\Omega)}
    \end{align*}
    hold true for all test functions $\phi\in H^1(\Omega)$ and all $1\leq i\leq N$.
    \item[$(iv)$] The equalities
    \begin{align*}
     \partial_tv=\sum_{i=1}^N(a_iu_i- b_iv),\qquad
     \partial_t\sigma=\alpha_v\sum_{i=1}^N(a_iu_i- b_iv).
    \end{align*}
    hold almost everywhere in $\Omega\times S_T$.
\end{itemize}
\end{defn}


\section{Analysis}\label{analysis}
We start by mentioning the main differences to the model investigated and analysed in \cite{EdenNikolopoulosMuntean22}:

\begin{enumerate}
    \item[$(a)$] The changing microgeometry is more complicated. Instead of looking at only growing and shrinking balls, the current setup allows for much more general structures as depicted in, e.g., \cref{pic}.
    In particular, multiple obstacles in a cell are possible now and the shape can depend on the macroscopic variable.
    \item[$(b)$] As a consequence of $(a)$, some macroscopic coefficients like $A(\sigma)$ are different.
    \item[$(c)$] The analysis in \cite{EdenNikolopoulosMuntean22} only looked at the 2D case. Here, we tackle the problem for general $d\in\N$.
\end{enumerate}

In principle, most of the arguments from \cite{EdenNikolopoulosMuntean22} can be fitted to the current setup with only small adjustments.
For that reason, we skip most of the details here; however, for the convenience of the reader we do outline the main strategy of the existence proof and highlight the small but necessary changes to extend  \cite[Theorem 11]{EdenNikolopoulosMuntean22} to our setting in \cref{thm.existence}.
In our earlier work, we did not investigate uniqueness of solutions which we rectify here by establishing a weak-strong uniqueness principle where we show that the existence of a sufficiently regular solution (with the exact regularity dependent on the spatial dimension $d$) automatically guarantees uniqueness of weak solutions (see \cref{thm.uniqueness}).

\begin{lem}[Lipschitz estimates for geometric coefficients]\label{lemma:lipschitz_coefficient}
    There is a Lipschitz constant $C>0$ such that
    \[
    |A(x,\sigma_1)-A(x,\sigma_2)|\leq C|\sigma_1-\sigma_2|
    \]
    for all $x\in\Omega$ and $\sigma_1,\sigma_2\in[-\sigma^*,\sigma^*]$.
\end{lem}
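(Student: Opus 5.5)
Since $A(x,\sigma)=|\Gamma(x,\sigma)|/|Y^f(x,\sigma)|$, the plan is to prove that numerator and denominator are each Lipschitz in $\sigma$, uniformly in $x$. The quotient then inherits the property, because the denominator is bounded below by $\lambda>0$ and the numerator above by $\gamma_2$. Concretely,
\[
|A(x,\sigma_1)-A(x,\sigma_2)|\le \frac{\bigl||\Gamma(x,\sigma_1)|-|\Gamma(x,\sigma_2)|\bigr|}{\lambda}+\frac{\gamma_2\,\bigl||Y^f(x,\sigma_1)|-|Y^f(x,\sigma_2)|\bigr|}{\lambda^2},
\]
so it suffices to bound both differences by $C|\sigma_1-\sigma_2|$ with $C$ independent of $x$.

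For the volume term, I will use the coarea formula for the signed distance function $d_x$. On the tubular neighbourhood $\{|d_x|<2\sigma^*\}$ we have $|\nabla d_x|=1$ a.e. by Assumption A1.(iii). Hence
\[
|Y^f(x,\sigma_1)|-|Y^f(x,\sigma_2)|=\int_{\sigma_1}^{\sigma_2}|\Gamma(x,s)|\di{s}
\]
(up to sign), and this is bounded by $\gamma_2|\sigma_1-\sigma_2|$. The upper bound $\gamma_2$ is the uniform bound on $|\Gamma(x,\sigma)|$ already recorded after (A1).

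For the surface term, I will use the normal parametrization $\Gamma(x)\ni z\mapsto z+s\nu(z)\in\Gamma(x,s)$. By the unique-nearest-point property (reach $\ge 2\sigma^*$), this map is a $C^2$ diffeomorphism for $|s|\le\sigma^*$. Its area element is
\[
J(z,s)=\prod_{m=1}^{d-1}(1+s\kappa_m(z)),
\]
where $\kappa_m$ are the principal curvatures of $\Gamma(x)$. Positive reach $2\sigma^*$ gives $|\kappa_m|\le (2\sigma^*)^{-1}$, so for $|s|\le\sigma^*$ each factor lies in $[\tfrac12,\tfrac32]$. Consequently $|\partial_sJ|\le C(d,\sigma^*)$, and
\[
\bigl||\Gamma(x,\sigma_1)|-|\Gamma(x,\sigma_2)|\bigr|\le\int_{\Gamma(x)}|J(z,\sigma_1)-J(z,\sigma_2)|\di{\gamma}\le C\,|\Gamma(x)|\,|\sigma_1-\sigma_2|.
\]
The factor $|\Gamma(x)|$ is uniformly bounded, as noted after (A1).

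The main obstacle is the uniformity in $x$ of the curvature bound, and in particular the factor $\tfrac12$ in the definition of $\sigma^*$. I need that the infimum over $x$ of the reach is $2\sigma^*$, so that every $\Gamma(x)$ individually has reach at least $2\sigma^*$. The classical fact (Federer) that reach $\ge r$ implies principal curvatures bounded by $1/r$ then gives a curvature bound independent of $x$; the $C^3$ regularity assumed in A1.(i) is more than enough to justify the Jacobian formula. Working with reach $2\sigma^*$ rather than exactly $\sigma^*$ keeps the factors $1+s\kappa_m$ bounded away from zero, which avoids degeneracy at $|s|=\sigma^*$.
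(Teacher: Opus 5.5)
Your proof is correct. It uses the same geometric inputs as the paper: reach $\ge 2\sigma^*$ giving principal curvatures bounded by $(2\sigma^*)^{-1}$, the normal-parametrization area factor $\prod_m(1+\sigma\kappa_m)$, the lower bound on $|Y^f(x,\sigma)|$, and the uniform bound on $|\Gamma(x)|$. You apply them at a different point in the argument, however.

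The paper takes Lipschitz continuity for each fixed $x$, with some constant $C(x)$, as clear. It uses the curvature bound only to show $|\Gamma(x,\sigma)|\le(3/2)^{d-1}\sup_x|\Gamma(x)|$, i.e.\ that $A$ is uniformly bounded. It then concludes $\sup_x C(x)<\infty$. As written, that last inference needs more: uniform boundedness of a family of Lipschitz functions says nothing about their Lipschitz constants.

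Your argument supplies exactly the missing piece, namely uniform-in-$x$ control of the $\sigma$-variation of numerator and denominator. You get $\partial_\sigma|Y^f(x,\sigma)|=-|\Gamma(x,\sigma)|$ from the coarea formula, and $|\partial_s J|\le (d-1)(2\sigma^*)^{-1}(3/2)^{d-2}$ for the area element. Combined with the quotient estimate, this gives an explicit constant, e.g.\ $C=\lambda^{-1}(d-1)(2\sigma^*)^{-1}(3/2)^{d-2}\sup_x|\Gamma(x)|+\gamma_2^2\lambda^{-2}$, which depends only on $d$, $\sigma^*$, $\lambda$ and $\sup_x|\Gamma(x)|$.

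The paper's version is shorter but leaves the key uniformity step implicit; yours proves the lemma as stated. As a small simplification, you could also obtain the volume difference from the same tube formula, $\int_{\Gamma(x)}\int_{\sigma_1}^{\sigma_2}J(z,s)\,ds\,d\gamma(z)$, instead of invoking the coarea formula separately.
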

\begin{proof}
    Due to the regularity of the reference domain and the fact that we stay inside the tubular neighbourhood, this is clear for each $x\in\Omega$, i.e., it holds
    \(
    |A(x,\sigma_1)-A(x,\sigma_2)|\leq C(x)|\sigma_1-\sigma_2|
    \)
    for $\sigma_1,\sigma_2\in[-\sigma^*,\sigma^*]$ for some $C(x)\ge0$.
    
    Now, $\delta\le |Y^f(x,\sigma)|\le 1-\delta$ for some $\delta\in(0,\nicefrac12)$.
    For the surface measure, we can estimate 
    \[
    |\Gamma(x,\sigma)|\le(1+\sigma^*\kappa^*(x))^{d-1}|\Gamma(x)|
    \]
    where $\kappa^*(x)$ is the maximal principal curvature of $\Gamma(x,0)$.
    With $\kappa^*(x)\le(\reach(\Gamma(x)))^{-1}\le(2\sigma^*)^{-1}$, we get
    \[
    \sup_{x\in\Omega}|\Gamma(x,\sigma)|\le \left(\frac32\right)^{d-1}\sup_{x\in\Omega}|\Gamma(x)|<\infty.
    \]
    As a consequence, $A(x,\sigma)=\frac{|\Gamma(x,\sigma)|}{|Y^f(x,\sigma)|}$ is also uniformly bounded in $x$ and therefore $\sup_{x\in\Omega}C(x)<\infty$
\end{proof}

Besides these geometric coefficients, the cell problems and therefore the diffusivity also depends directly on $\sigma$.
In the following, we show that they also depend Lipschitz continuously on $\sigma$ and are uniformly coercive:

\begin{lem}\label{lemma:lipschitz.diffusivity}
There is a constant $c_D>0$ such that
$
c_D|\xi|^2\leq D_i(x,\sigma)\xi\cdot\xi
$
for all $\xi\in\R^d$, for all $x\in\R^n$, and for all $\sigma\in[-\sigma^*,\sigma^*]$.
Moreover, it holds $D_i\in L^\infty(\Omega;C^{1,1}([-\sigma^*,\sigma^*];\R^{d\times d}))$.
\end{lem}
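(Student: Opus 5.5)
The proof splits into two parts: uniform coercivity, obtained from the variational characterization of the homogenized tensor, and regularity in $\sigma$, obtained by transforming the cell problems onto the fixed reference geometry $Y^f(x):=Y^f(x,0)$.

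\textbf{Coercivity.} Testing the weak cell problem with $w_j$ itself gives the symmetric representation
\[
D_i(x,\sigma)\xi\cdot\xi=d_i\phi(x,\sigma)\int_{Y^f(x,\sigma)}|\nabla_y w_\xi+\xi|^2\di{y},\qquad w_\xi:=\sum_k\xi_kw_k .
\]
Moreover, $w_\xi$ minimizes $\int_{Y^f}|\nabla\psi+\xi|^2$ over periodic $\psi\in H^1_\#(Y^f(x,\sigma))$.

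To bound this from below, choose a straight channel parallel to $e_j$. By Assumption (A1)(ii)–(iii), a periodic slab $\{y: \dist(y_j\text{-faces})\}$ near $\partial Y$ of width $\delta-\sigma^*>0$ lies entirely in $Y^f(x,\sigma)$; if necessary, shrink $\sigma^*$ so that $\sigma^*<\delta/2$. On each line segment through this slab in direction $e_j$, periodicity gives $\int_0^1\partial_j\psi\,dy_j=0$. Hence $\int_{\text{slab}}(\partial_j\psi+\xi_j)^2\ge c\,\xi_j^2$, via Jensen on each line.

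Summing over $j$ and using $\phi\ge\lambda$ gives $c_D=d_i\lambda c/d$, independent of $x$ and $\sigma$.

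\textbf{Regularity in $\sigma$.} For fixed $x$, construct a family of diffeomorphisms $\Phi_\sigma:Y\to Y$ with $\Phi_\sigma(Y^f(x))=Y^f(x,\sigma)$. It is the identity outside the tubular neighbourhood $\{|d_x|<2\sigma^*\}$. Inside, it reads
\[
\Phi_\sigma(y)=y+\sigma\,\chi(d_x(y))\,\nabla d_x(y),
\]
with a fixed cutoff $\chi$ satisfying $\chi(0)=1$ and $|\chi'|$ small, so that $y\mapsto d_x(\Phi_\sigma(y))$ is monotone in the normal direction.

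Since $\Gamma(x)$ is $C^3$ with reach $\ge2\sigma^*$, $d_x$ is $C^3$ in the neighbourhood. Therefore $\Phi_\sigma$ is $C^2$ in $y$ and affine, hence smooth, in $\sigma$, with bounds depending only on $\sigma^*$; the curvatures are bounded by $(2\sigma^*)^{-1}$, as in the proof of Lemma~\ref{lemma:lipschitz_coefficient}.

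Pull back the cell problem to $\tilde w_k=w_k\circ\Phi_\sigma$ on $Y^f(x)$. This gives a uniformly elliptic periodic problem
\[
\int_{Y^f(x)}M(\sigma)\nabla\tilde w_k\cdot\nabla\xi=-\int_{Y^f(x)}J_\sigma\, e_k\cdot D\Phi_\sigma^{-T}\nabla\xi ,
\]
using that the Neumann datum is the weak form of $\dive e_k=0$. Here $M(\sigma)=J_\sigma D\Phi_\sigma^{-1}D\Phi_\sigma^{-T}$ and $J_\sigma=\det D\Phi_\sigma$, both smooth in $\sigma$ with uniform bounds.

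Differentiate in $\sigma$ twice. The derivatives $\partial_\sigma\tilde w_k$ and $\partial^2_\sigma\tilde w_k$ solve problems of the same type, with right-hand sides controlled by $\|\nabla\tilde w_k\|_{L^2}$. Lax–Milgram then gives uniform $H^1$ bounds, via a Poincaré–Wirtinger constant uniform in $x$; this uniformity is argued from (A1) through the fixed slab structure and uniform extension operators.

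Finally, write $D_i$ as an integral over $Y^f(x)$ of smooth functions of $\sigma$ times $\nabla\tilde w_k$. Together with $\phi(x,\sigma)=\int_{Y^f(x)}J_\sigma$, this gives $D_i(x,\cdot)\in C^{1,1}$, in fact $C^2$, with norms bounded uniformly in $x$.

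Measurability in $x$ follows since $D_i$ is a continuous function of the data, in the same way as in \cite{EdenNikolopoulosMuntean22}. This yields $D_i\in L^\infty(\Omega;C^{1,1})$.

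\textbf{Main obstacle.} The hard step is the uniformity in $x$ of all constants. The number and shape of the inclusions vary with $x$, so the Poincaré and extension constants on $Y^f(x)$ must be controlled only through $\delta$, $\sigma^*$ and $k^*$. The first thing to try is a uniform extension operator built from local reflections in the tubular coordinates $y\mapsto y-2d_x(y)\nabla d_x(y)$. This is well defined in the neighbourhood of width $2\sigma^*$ and has norm bounded by the $C^2$ bound of $d_x$. With it, Poincaré on $Y^f$ reduces to Poincaré on the torus.
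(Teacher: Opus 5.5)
Your argument is correct in substance. For the $\sigma$-regularity it follows the paper's route. The paper's proof only says to transport the cell problems to the reference geometry $\sigma=0$, and refers to \cite{EdenNikolopoulosMuntean22} and \cite{EdenFreudenbergMuntean2025} for the computation. Your normal-flow map $\Phi_\sigma(y)=y+\sigma\chi(d_x(y))\nabla d_x(y)$, together with the differentiated pulled-back problems, is that computation made explicit. Since $\sigma\mapsto M(\sigma)$ and $\sigma\mapsto J_\sigma D\Phi_\sigma^{-1}$ are analytic into $L^\infty$, the solution map is analytic into $H^1_\#(Y^f(x))/\R$. This justifies differentiating and gives $C^2$, which is more than $C^{1,1}$.

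The coercivity part is genuinely different. The paper handles positivity by reference to the ball case, like the Lipschitz bound, and dispatches uniformity in $x$ with the remark that the assumptions exclude local degeneracies. You instead combine the energy identity $D_i\xi\cdot\xi=d_i\phi\int_{Y^f}|\nabla w_\xi+\xi|^2$ with Jensen on periodic lines inside a fluid slab along $\partial Y$. This yields the explicit constant $c_D=d_i\lambda(\delta-\sigma^*)/d$, and it is what actually makes the bound uniform in $x$ despite the varying number and shape of inclusions.

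Be aware of what this argument uses.
\begin{enumerate}
\item It needs a transverse direction $l\neq j$ to host the slab, i.e.\ $d\ge2$. For $d=1$ the lemma is false: $\nabla w_1+e_1\equiv0$ and $D_i=0$.
\item It needs $\sigma^*<\delta$, which does not follow from (A1). A single disc of radius $0.4$ centred in $Y$ gives $\sigma^*=0.2>\delta=0.1$, and the grown disc leaves $Y$. So your ``shrink $\sigma^*$'' is a necessary correction of the hypotheses, not a convenience. The same smallness ($2\sigma^*<\delta$) is what makes $\Phi_\sigma$ the identity near $\partial Y$, and hence periodicity-preserving.
\end{enumerate}

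Three points in the regularity part should be simplified or corrected.
\begin{enumerate}
\item Your ``main obstacle'' is not one. $D_i$ involves only $\nabla\tilde w_k$. Lax--Milgram on $H^1_\#(Y^f(x))/\R$ normed by $\|\nabla\cdot\|_{L^2}$ is available because this is a Hilbert space for each fixed $x$, by the ordinary $x$-dependent Poincar\'e inequality. It bounds $\nabla\tilde w_k$, $\nabla\partial_\sigma\tilde w_k$ and $\nabla\partial_\sigma^2\tilde w_k$ solely through the ellipticity and $L^\infty$ bounds of $M$, its $\sigma$-derivatives, and $J_\sigma D\Phi_\sigma^{-1}$. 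No uniform Poincar\'e constant and no extension operator are needed.
\item Those $L^\infty$ bounds involve only $D\Phi_\sigma$, i.e.\ $D^2d_x$. The reach controls this, since principal curvatures are at most $(2\sigma^*)^{-1}$, provided $\operatorname{supp}\chi$ is a compact subset of $(-2\sigma^*,2\sigma^*)$ and $\sigma^*\sup|\chi'|<1$. By contrast, a $C^2$ bound on $\Phi_\sigma$ uniform in $x$ would need derivatives of the curvature, which (A1) does not control, so drop that claim.
\item Measurability of $x\mapsto D_i(x,\cdot)$ does not follow from (A1), which says nothing about how $Y^s(x)$ depends on $x$. ``Continuous function of the data'' would need a topology on the geometries, continuity of $D_i$ in it, and measurability of $x\mapsto Y^s(x)$ as an extra hypothesis. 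The paper's proof leaves exactly the same point open.
\end{enumerate}
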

\begin{proof}
    A comparable statement for positivity and Lipschitz continuity was shown in \cite[Lemma 4 and Lemma 8]{EdenNikolopoulosMuntean22} for growing/shrinking ball inclusions in 2D.
    The same approach (transforming to the reference geometry for $\sigma=0$) can be utilized here as well.
    While the calculations are a bit tedious, the argument is rather straightforward; we point to \cite[Lemma 3 and Lemma 7]{EdenFreudenbergMuntean2025} where the same is shown for a heat conductivity problem.

    The only thing eventually missing is the additional $x$-dependency of the diffusivities, which are purely a consequence of the initial geometric setup.
    In the context of this work, Assumption~\eqref{as:coefficients} ensures that nothing goes wrong (i.e. local degeneracies are prohibited to happen).
\end{proof}


\begin{thm}[Existence]\label{thm.existence}
There is a time interval $S_T=(0,T)$ and a local-in-time weak solution 
\[
(u,v,\sigma)\in \mathcal{W}(S_T)\cap L^\infty(\Omega\times S_T)\times L^\infty(\Omega;W^{1,\infty}(S_T))\times L^\infty(\Omega;W^{1,\infty}(S_T))
\]
solving the Problem in the sense of \cref{def:weak_solution}.
\end{thm}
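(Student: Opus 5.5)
The plan is a fixed-point argument (Schauder, or a Banach contraction on a short interval) on the offset variable $\sigma$. We decouple the system, solve the pieces with $\sigma$ frozen, and then recover $\sigma$ from $(P_\sigma)$. Fix $M>0$ and a time $T>0$, both to be chosen, and consider the closed convex set
\[
K_T:=\{\tilde\sigma\in L^\infty(\Omega;W^{1,\infty}(S_T))\ :\ \tilde\sigma(0)=0,\ \|\partial_t\tilde\sigma\|_{L^\infty(\Omega\times S_T)}\le M\}.
\]
For $T\le\sigma^*/M$, every $\tilde\sigma\in K_T$ satisfies $|\tilde\sigma|\le\sigma^*$. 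Hence \cref{lemma:lipschitz_coefficient} and \cref{lemma:lipschitz.diffusivity} apply: $A(x,\tilde\sigma)$ is bounded, and $D_i(x,\tilde\sigma)$ is measurable, bounded and uniformly coercive with constant $c_D$.

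\textbf{Step 1 (frozen geometry).} For given $\tilde\sigma\in K_T$, solve the coupled system $(P_u)$–$(P_v)$ with coefficients $D_i(x,\tilde\sigma)$ and $A(x,\tilde\sigma)$. First, $(P_v)$ is a linear ODE in $v$ for given $u$, so we can write $v$ explicitly through Duhamel's formula. Next, we truncate the Smoluchowski terms $R_i$ at a level $L$ to make them globally Lipschitz. A Galerkin approximation or a Banach fixed point in $L^2(S_T\times\Omega)^N$ then gives a unique solution $u\in\mathcal{W}(S_T)$ of the truncated problem.

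We then show positivity by testing with $u_i^-$ and $v^-$. This works because the system is quasi-positive: the loss terms are proportional to $u_i$, the gain terms are nonnegative, and the exchange terms are of the form $+b_iv$ and $+a_iu_i$.

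For $L^\infty$ bounds we use one of two routes. The first is an invariant-region argument: compare with the spatially homogeneous ODE system, whose solution stays bounded on a short time interval depending only on $\|u_0\|_\infty$ and $\|v_0\|_\infty$. The second is Moser/Stampacchia iteration applied to $(u_i-k)^+$, using that $D_i$ is uniformly elliptic. Either way, taking $L$ large makes the truncation inactive. Uniform $\mathcal{W}(S_T)$ bounds follow by testing the equation with $u_i$ and $\partial_t u_i$; the coefficients' time derivatives are controlled because $\partial_t D_i(\tilde\sigma)=\partial_\sigma D_i\,\partial_t\tilde\sigma$ is bounded, by the $C^{1,1}$ regularity in \cref{lemma:lipschitz.diffusivity}.

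\textbf{Step 2 (update of $\sigma$).} Define
\[
\mathcal{F}(\tilde\sigma)(t,x):=\alpha_v\int_0^t\sum_i\bigl(a_iu_i-b_iv\bigr)\,ds .
\]
By Step 1, $|\partial_t\mathcal{F}(\tilde\sigma)|\le\alpha_v\sum_i(a_i+b_i)C_\infty=:M$, where $C_\infty$ is the $L^\infty$ bound. This bound is uniform in $\tilde\sigma$ once $T\le T_0$, so we first fix $M$ and then shrink $T$ so that $\mathcal{F}(K_T)\subset K_T$.

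\textbf{Step 3 (contraction).} Take $\tilde\sigma_1,\tilde\sigma_2\in K_T$ with corresponding solutions $(u^1,v^1)$ and $(u^2,v^2)$. Subtract the equations and test with $u^1-u^2$. The critical term is
\[
\bigl((D_i(\tilde\sigma_1)-D_i(\tilde\sigma_2))\nabla u^2_i,\nabla(u^1_i-u^2_i)\bigr),
\]
and this is the main obstacle. The Lipschitz estimate from \cref{lemma:lipschitz.diffusivity} only bounds it by $\|\tilde\sigma_1-\tilde\sigma_2\|_{L^\infty}\|\nabla u^2\|_{L^2}\|\nabla(u^1-u^2)\|_{L^2}$. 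This is why the fixed point must be set in the $L^\infty(\Omega\times S_T)$ norm for $\sigma$, which is possible because $\sigma$ solves an ODE pointwise in $x$. The remaining terms are Lipschitz on the bounded set given by the $L^\infty$ bounds of Step 1, with $R_i$ locally Lipschitz and $A$ Lipschitz by \cref{lemma:lipschitz_coefficient}. Young's and Gronwall's inequalities then give $\|u^1-u^2\|_{L^2(S_T;H^1)}\le C\|\tilde\sigma_1-\tilde\sigma_2\|_{L^\infty}$.

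This estimate only controls $\mathcal{F}(\tilde\sigma_1)-\mathcal{F}(\tilde\sigma_2)$ in $L^\infty_x L^2_t$-type norms, not in $L^\infty$. To avoid this mismatch, we would rather apply Schauder's theorem in $C(\overline{S_T};L^2(\Omega))$. Compactness would come from the fact that $\partial_t\mathcal{F}$ is controlled in $L^\infty$ and $\mathcal{F}(\tilde\sigma)$ is built from $u\in\mathcal{W}(S_T)$, which is compact in $L^2$ by Aubin–Lions. Continuity of $\mathcal{F}$ follows from the difference estimate, using the $L^\infty$ bounds and interpolation to pass from $L^2$ to $L^p$ convergence of $\tilde\sigma$; dominated convergence handles $D_i(\tilde\sigma)\to D_i(\sigma)$.

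A fixed point $\sigma=\mathcal{F}(\sigma)$, together with its $(u,v)$, yields the weak solution of \cref{def:weak_solution}: condition $(i)$ holds by the choice of $T$, and $(ii)$–$(iv)$ hold by construction.
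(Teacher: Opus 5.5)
Your final argument (Schauder in $\sigma$, not the contraction you abandon) is sound, but it is organized differently from the paper's proof.

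\textbf{How the two routes differ.} The paper solves $(P_v)$ and $(P_\sigma)$ explicitly as functionals of $u$: $v=L_I(u)$, $\sigma=L_{II}(u)$, and the cell solutions are $L_{III}(u)=L(x,L_{II}(u))$. The whole problem thereby becomes one nonlocal-in-time parabolic system for $u$ alone. Schauder's theorem is applied to this reduced problem, and maximum-principle $L^\infty$ bounds on $u$ keep $L_{II}(u)$ inside $[-\sigma^*,\sigma^*]$.

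You instead freeze the geometry and iterate on the offset. This gains you two things. First, the admissibility constraint $|\sigma|\le\sigma^*$ is built into the convex set $K_T$ (through $M$ and $T\le\sigma^*/M$), so $D_i(x,\tilde\sigma)$ and $A(x,\tilde\sigma)$ are always well-defined Carath\'eodory coefficients. Second, it exposes a clean source of compactness. Although $(P_\sigma)$ gives no spatial regularity, $\mathcal{F}(\tilde\sigma)$ is, up to a fixed term coming from $v_0$, a time integral of functions bounded in $L^2(S_T;H^1(\Omega))$, and its time derivative is bounded in $L^\infty$. Hence $\mathcal{F}(K_T)$ lies in a compact subset of $C(\overline{S_T};L^2(\Omega))$.

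The price is that your inner problem is itself a nonlinear system. You therefore need its well-posedness and $L^\infty$ bounds uniform in $\tilde\sigma$. In the paper, by contrast, the nonlinearity sits in a single fixed-point map on $u$.

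Both routes end with Schauder rather than Banach, for the reason you identified: an $L^2(S_T;H^1(\Omega))$ bound on differences of $u$ does not control differences of $\sigma$ in $L^\infty$. Note that your phrase ``$L^\infty_xL^2_t$-type'' should read $L^\infty_tL^2_x$.

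\textbf{Points to make explicit when you write it up.}

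(a) Show that $K_T$ is closed in $C(\overline{S_T};L^2(\Omega))$; the pointwise Lipschitz bound passes to $L^2$-limits.

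(b) For continuity of $\mathcal{F}$, split the critical term as $\bigl((D_i(\tilde\sigma_n)-D_i(\tilde\sigma))\nabla u_i,\nabla(u_i^{n}-u_i)\bigr)$, with $u$ the solution for the \emph{limit} $\tilde\sigma$. Dominated convergence then applies to $(D_i(\tilde\sigma_n)-D_i(\tilde\sigma))\nabla u_i$ along a.e.-convergent subsequences.

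(c) The comparison with the spatially homogeneous ODE of the full system is not legitimate as stated. The loss terms contribute $-\gamma_{ij}u_i\le 0$ to $\partial R_i/\partial u_j$, so the system is not cooperative, and that ODE solution need not be a supersolution. Compare instead with the cooperative majorant obtained by discarding the loss terms and $-Aa_iu_i$ and replacing $A$ by its supremum. Alternatively, use your Stampacchia route.

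(d) Testing with $\partial_tu_i$ to obtain $\partial_tu\in L^2(S_T\times\Omega)$ requires $u_0\in H^1(\Omega)$, which (A3) does not provide. This gap affects the claimed membership $u\in\mathcal{W}(S_T)$ in the statement itself, and the paper's proof sketch does not supply it either. Your compactness argument itself only needs $\partial_tu\in L^2(S_T;H^1(\Omega)^*)$.
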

\begin{proof}
The main strategy from the existence proof \cite[Theorem 13]{EdenNikolopoulosMuntean22} can mostly be transferred to the more general scenario considered here.
For the convenience of the reader, we shortly outline the solution strategy and highlight the changes in the analysis.

First, in the exact same way as in \cite{EdenNikolopoulosMuntean22}, we can reformulate the coupled problem as a semi-linear, parabolic PDE system:

\textbf{Step (a):} The ODE's for $v$ and $\sigma$ can be explicitly solved (setting $b = \sum_{i=1}^N  b_i$):
\begin{align}
L_{I}(u)(t,x):=v(t, x) &= e^{-bt} \left( v_0(x) + \sum_{i=1}^N a_i \int_0^t e^{b\tau} u_i(\tau, x)\di{\tau} \right),\\
L_{II}(u)(t,x):=\sigma(t, x) &=\alpha_v \sum_{i=1}^N \int_0^t \left( a_i u_i(\tau, x) -  b_i v(\tau, x) \right) \di{\tau}.
\end{align}

\textbf{Step (b):} Looking at the cell problems, we notice that the weak formulation 
\begin{align*}
(\nabla_y w_k,\nabla_y\xi)_{L^2(Y^f(x,\sigma))}&=\int_{\Gamma(x,\sigma)}e_k\cdot n\xi\di{\sigma},\quad (\xi\in H^1_\#(Y),\ k=1,..,d)
\end{align*}
gives us unique, zero-average solutions $w_k\in H_\#^1(Y(x,\sigma))$ for every given $\sigma\in[-\sigma^*,\sigma^*]$ and $x\in\Omega$.
We introduce the corresponding solution operator via
\[
L\colon\Omega\times[-\sigma^*,\sigma^*]\to H_\#^1(Y(x,\sigma))^N,\quad L(x,\sigma,\cdot)=(w_{1},w_{2},...,w_{n})(x,\sigma,\cdot)
\]
and get
\[
L_{III}(u)(t,x):=L(x,L_{II}(u(t,x))).
\]

\vspace{1em}
\textbf{Step (c):} Putting everything together, we can rewrite the parabolic problem into
\begin{multline*}
\langle\partial_t u_i,\phi\rangle_{H^1(\Omega)^*}-(D_i(x,L_{III}(u))\nabla u_i,\nabla\phi)_{L^2(\Omega)}\\ =(R_i(u),\phi)_{L^2(\Omega)}-(A(x,L_{III}(u))(a_iu_i- b_iL_{I}(u)),\phi)_{L^2(\Omega)}
\end{multline*}
which is a semi-linear parabolic PDE for which an extensive solution theory is available using fixed-point arguments as long as $D, R_i, A, L_{I},L_{II},L_{III}$ are well enough behaved.

This is the case as long as it can be ensured that $\sigma\in[-\sigma^*,\sigma^*]$.
This has been shown in \cite{EdenNikolopoulosMuntean22} for radial inclusions (in 2D) and in \cite{EdenMuntean2024} for general $C^3$-inclusions.
With the regularity results from \cref{lemma:lipschitz_coefficient} and \cref{lemma:lipschitz.diffusivity}, this can be transferred to our setup.
To make sure that $\sigma$ stays bounded, $L^\infty$-estimates are needed for the $u_i$; and those are available via maximum principle based on the structure of the right hand side (cf.~\cite[Lemma 8]{EdenNikolopoulosMuntean22}).
Although this was only done for 2D domains, there are no dimensional consideration and the argument can be lifted directly to any space dimension.
From here, Schauder's fixed-point theorem can be used to show the existence of at least one weak solution.
\end{proof}

\begin{rem}
    Let $T^*$ be the maximal existence interval. Then, either $T^*= \infty$ or $\sup_{x}|\sigma(x,T^*)|=\sigma^*$.
    In other words, as long as the offset limits are not reached, the solution can be extended further in time.
\end{rem}

\begin{thm}[Weak-strong uniqueness]\label{thm.uniqueness}
    If there is a weak solution $(u,v,w,\sigma)$ with the additional regularity $u\in L^2(S_T;W^{1,p}(\Omega))^N$ for some $p>d$.
    Then, this is the unique weak solution.
\end{thm}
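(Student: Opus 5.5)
Let $(u,v,\sigma)$ be the regular solution with $u\in L^2(S_T;W^{1,p}(\Omega))^N$, $p>d$, and let $(\tilde u,\tilde v,\tilde\sigma)$ be an arbitrary weak solution in the sense of \cref{def:weak_solution}. Both are bounded: $u,\tilde u\in L^\infty$, and $\sigma,\tilde\sigma\in[-\sigma^*,\sigma^*]$. Set $U=u-\tilde u$, $V=v-\tilde v$, $\Sigma=\sigma-\tilde\sigma$. The plan is a Gronwall argument for
\[
E(t):=\|U(t)\|_{L^2(\Omega)}^2+\|V(t)\|_{L^2(\Omega)}^2+\|\Sigma(t)\|_{L^2(\Omega)}^2,
\]
with $E(0)=0$.

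\textbf{Step 1 (ODE parts).} By Definition~\ref{def:weak_solution}(iv), the equations for $V$ and $\Sigma$ hold pointwise a.e. and are linear. Hence
\[
\tfrac{d}{dt}\|V\|^2+\tfrac{d}{dt}\|\Sigma\|^2\le C\bigl(\|U\|^2+\|V\|^2+\|\Sigma\|^2\bigr).
\]

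\textbf{Step 2 (PDE part).} Subtract the weak forms and test with $U_i$. Write the diffusion difference as
\[
D_i(\sigma)\nabla u_i-D_i(\tilde\sigma)\nabla\tilde u_i=D_i(\tilde\sigma)\nabla U_i+\bigl(D_i(\sigma)-D_i(\tilde\sigma)\bigr)\nabla u_i .
\]
The first term gives $c_D\|\nabla U_i\|^2$ by the uniform coercivity of \cref{lemma:lipschitz.diffusivity}.

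The reaction terms $R_i$ are quadratic polynomials, hence Lipschitz on the bounded range of $u,\tilde u$, so they contribute at most $C\|U\|^2$.

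For the exchange term, write
\[
A(\sigma)(a_iu_i-b_iv)-A(\tilde\sigma)(a_i\tilde u_i-b_i\tilde v)=A(\tilde\sigma)(a_iU_i-b_iV)+\bigl(A(\sigma)-A(\tilde\sigma)\bigr)(a_iu_i-b_iv).
\]
By \cref{lemma:lipschitz_coefficient}, the uniform bound on $A$, and the $L^\infty$ bounds on $u,v$, this contributes at most $C(\|U\|^2+\|V\|^2+\|\Sigma\|^2)$.

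\textbf{Step 3 (main obstacle).} The hard term is
\[
\int_\Omega \bigl(D_i(\sigma)-D_i(\tilde\sigma)\bigr)\nabla u_i\cdot\nabla U_i\,dx .
\]
Lipschitz continuity of $D_i$ in $\sigma$, uniformly in $x$ (\cref{lemma:lipschitz.diffusivity}), bounds it by
\[
C\int_\Omega|\Sigma|\,|\nabla u_i|\,|\nabla U_i|\,dx\le C\|\Sigma\|_{L^q}\|\nabla u_i\|_{L^p}\|\nabla U_i\|_{L^2},\qquad \tfrac1q+\tfrac1p=\tfrac12 .
\]
Here $\Sigma$ has only $L^\infty$ regularity in $x$ and no spatial gradient, so we cannot use Sobolev embedding on it. Instead we interpolate between $L^2$ and $L^\infty$, using $|\Sigma|\le2\sigma^*$:
\[
\|\Sigma\|_{L^q}\le (2\sigma^*)^{1-2/q}\|\Sigma\|_{L^2}^{2/q}.
\]
This gives only a sublinear power of $\|\Sigma\|_{L^2}$, which would lead to an Osgood-type rather than a Gronwall inequality. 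To avoid this, I would exploit the ODE structure more carefully.

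From Step~(a) in the proof of \cref{thm.existence}, $\Sigma(t,x)=\int_0^t(\text{linear in } U,V)\,d\tau$ pointwise in $x$. Applying Minkowski's inequality in $L^q$ gives
\[
\|\Sigma(t)\|_{L^q}\le C\int_0^t\|U\|_{L^q}\,d\tau .
\]
The right-hand side is now controlled through the Gagliardo--Nirenberg inequality
\[
\|U\|_{L^q}\le C\|U\|_{L^2}^{1-\theta}\|U\|_{H^1}^{\theta},\qquad \theta=\tfrac dp<1 ,
\]
which is exactly where $p>d$ is used.

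Collecting terms and applying Young's inequality, the gradient factors $\|\nabla U\|^{\theta}$ and $\|\nabla U_i\|$ are absorbed into $c_D\|\nabla U\|^2$, using time-integrated versions of the estimates. What remains is an integral inequality of the form
\[
F(t)\le C\int_0^t g(s)\,F(s)\,ds,\qquad g=1+\|\nabla u\|_{L^p}^{r},
\]
for a quantity $F$ built from $E$ and $\int_0^t\|\nabla U\|^2$, where $r$ is a suitable exponent.

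The delicate points are choosing the exponents so that $g\in L^1(S_T)$ follows from $u\in L^2(S_T;W^{1,p})$, and handling the time convolution $\int_0^t$ inside $\|\Sigma(t)\|_{L^q}$, which requires Cauchy--Schwarz in time. If the exponent bookkeeping fails in the general case, I would fall back on a $\theta$-dependent requirement on $r$, matching the remark in the statement that the needed regularity depends on $d$. Gronwall's lemma then gives $E\equiv0$, so $\tilde u=u$, $\tilde v=v$, $\tilde\sigma=\sigma$. Uniqueness of $w$ follows from the unique solvability of the cell problems for each fixed $\sigma$.
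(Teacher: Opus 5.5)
Your architecture matches the paper's. You use the same splitting of the diffusion and exchange differences and the same exponent $q=2p/(p-2)$. You also use the same decisive device: write $\Sigma$ as a time integral of $U$ and apply Minkowski's inequality in $L^q$, so the $L^q$-norm falls on $U$, where $p>d$ makes $H^1(\Omega)\hookrightarrow L^q(\Omega)$ available.

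The gap is that you stop exactly where the argument has to be closed. You leave $r$ and the handling of the time convolution open, and you contemplate a $\theta$-dependent time exponent as a fallback. That fallback would prove a weaker theorem than the one stated: the statement's only dimension dependence is $p>d$, and its time exponent is $2$. As written, the proposal does not establish the result.

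It does close with $r=2$, and Gagliardo--Nirenberg is unnecessary. The point you are missing is that $\nabla U$ enters $\Sigma(\tau)$ only through $\int_0^\tau$, i.e. through a quantity already controlled by the energy up to time $\tau$. So only the factor $\|\nabla U_i(\tau)\|_{L^2}$ has to be absorbed. Concretely, apply Young pointwise in time to $\|\Sigma(\tau)\|_{L^q}\|\nabla u_i(\tau)\|_{L^p}\|\nabla U_i(\tau)\|_{L^2}$. Then use Cauchy--Schwarz in time and the continuous embedding:
\[
\|\Sigma(\tau)\|_{L^q}^2\le C\tau\int_0^\tau\|U(s)\|_{H^1}^2\,ds\le C_T F(\tau),\qquad F(\tau):=\sup_{s\le\tau}\|U(s)\|_{L^2}^2+\int_0^\tau\|\nabla U(s)\|_{L^2}^2\,ds .
\]
After absorbing $\tfrac{c_D}{2}\int_0^t\|\nabla U\|_{L^2}^2$, and noting that the right-hand side is nondecreasing in $t$ (which justifies the supremum on the left), you get
\[
F(t)\le C\int_0^t\bigl(1+\|\nabla u(\tau)\|_{L^p}^2\bigr)F(\tau)\,d\tau .
\]
The weight lies in $L^1(S_T)$ exactly because $u\in L^2(S_T;W^{1,p}(\Omega))$, and Gronwall gives $F\equiv0$. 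The exchange terms fit in as well, since $\|V(\tau)\|_{L^2}$ and $\|\Sigma(\tau)\|_{L^2}$ are bounded by $C_T F(\tau)^{1/2}$ through the same integral representation.

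The paper closes the argument differently. It applies Hölder in time to the whole term to extract $\sqrt t\,\|u\|_{L^2(0,t;W^{1,p})}$. It then uses Ehrling's lemma, $\|w\|_{L^q}\le C_\delta\|w\|_{L^2}+\delta\|\nabla w\|_{L^2}$, which relies on compactness of $H^1\hookrightarrow L^q$ for $q<q^*$, to make the gradient--gradient product small enough to absorb. It ends with an unweighted Gronwall inequality. Your weighted route needs only continuity of the embedding; the paper's needs compactness but avoids the weight.
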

\begin{proof}
We assume that we have two sets of solutions $(u^{(j)},v^{(j)},w^{(j)},\sigma^{(j)})$ ($j=1,2$) and denote their difference by $(\bar{u},\bar{v},\bar{w}_k,\bar{\sigma})$.
The surface concentrations are given as
\[
v^j(t,x)=e^{-bt}\left(v_0(x)+\sum_{i=1}^Na_i\int_0^te^{b\tau}u_i^j(\tau,x)\di{\tau}\right)
\]
and they are related to the offset parameter $\sigma^{(j)}$ via $\sigma^{(j)}=\alpha_v (v^j-v_0)$.
Therefore, 
\[
\bar{\sigma}(t,x):=\sigma^{(1)}(t,x)-\sigma^{(2)}(t,x)=\alpha_v\sum_{i=1}^N a_i\int_0^te^{b(\tau-t)}\bar{u}_i(\tau,x)\di{\tau}
\]
and
\begin{equation}\label{eq:bar_sigma_estimate}
|\bar{\sigma}(t,x)|\leq\alpha_v\sum_{i=1}^N a_i\int_0^t|\bar{u}_i(\tau,x)|\di{\tau}.
\end{equation}
Now, the weak form of the reaction diffusion problems governing the concentrations $u_i^{(j)}$ is given by
\begin{multline*}
\int_\Omega\partial_t\bar{u}_i\varphi\di{x}+\int_\Omega D_i(\sigma^{(1)})\nabla\bar{u}_i\cdot\nabla\varphi\di{x}
+\int_\Omega\left(D_i(\sigma^{(1)})-D_i(\sigma^{(2)})\right)\nabla u_i^{(2)}\cdot\nabla\varphi\di{x}\\
=\int_\Omega (R_i(u^{(1)})-R_i(u^{(2)}))\varphi\di{x}
+\int_\Omega\left[A(\sigma^1)(a_iu_i^{(1)}- b_iv^{(1)})-A(\sigma^{(2)})(a_iu_i^{(2)}- b_iv^{(2)})\right]\varphi\di{x}.
\end{multline*}
Choosing $\varphi=\bar{u}_i$, integrating over $(0,t)$, and using the uniform positivity of $D_i$, we estimate
\begin{multline}\label{eq:initial_estimate}
\frac{1}{2}\|\bar{u}_i(t)\|_{L^2(\Omega)}^2+c_D\|\nabla\bar{u}_i\|_{L^2(S_T\times\Omega)}^2\\
\leq\underbrace{\int_0^t\int_\Omega\left|\left(D_i(\sigma^1)-D_i(\sigma^2)\right)\nabla u_i^{(2)}\cdot\nabla\bar{u}_i\right|\di{x}\di{\tau}}_{I_1}
+\underbrace{\int_0^t\int_\Omega (R_i(u^1)-R_i(u^2))\bar{u}_i\di{x}\di{\tau}}_{I_2}\\
+\underbrace{\int_0^t\int_\Omega\left[A(\sigma^1)-A(\sigma^2)\right](a_iu_i^{(1)}- b_iv^{(1)})\bar{u}_i\di{x}\di{\tau}}_{I_3}
+\underbrace{\int_0^t\int_\Omega A(\sigma^2)(a_i\bar{u}_i- b_i\bar{v})\bar{u}_i\di{x}\di{\tau}}_{I_4}.
\end{multline}
For the $I_1$-term, we use \cref{lemma:lipschitz.diffusivity}, \cref{eq:bar_sigma_estimate}, and Fubini to estimate via Hölder's inequality:
\[
I_1
\leq L_D\alpha\sum_{k=1}^N a_k\int_0^t\|\bar{u}_k\|_{L^1(0,\tau;L^{q}(\Omega))}\|\nabla u_i^{(2)}\|_{L^{p}(\Omega)}\|\nabla\bar{u}_i\|_{L^2(\Omega)}\di{\tau}
\]
with $q=\frac{2p}{p-2}$ and $p>d$ as in the assumption of the theorem.
We note that, with this choice, $q< q^*:=\frac{2d}{d-2}$ (with $q^*=\infty$ when $d=2$) and utilize the continuous and compact embedding from $H^1(\Omega)$ into $L^q(\Omega)$ for all $q<q^*$.
We now use Ehrling's lemma (note that $H^1(\Omega)$ is compactly embedded into $L^q(\Omega)$ since $q<q*$): for all $\delta>0$ there is $C_{\delta}>0$ such that
\[
\|u\|_{L^q(\Omega)}\leq C_{\delta}\|u\|_{L^2(\Omega)}+\frac{\delta}{2} \|\nabla u\|_{L^2(\Omega)}\quad  (u \in H^1(\Omega)).
\]
This implies
\[
I_1
\leq L_D\alpha\sum_{k=1}^N a_k\int_0^t\left(C_{\delta}\|\bar{u}_k\|_{L^1(0,\tau;L^{2}(\Omega))}+\frac{\delta}{2}\|\nabla\bar{u}_k\|_{L^1(0,\tau;L^{2}(\Omega))}\right)\|\nabla u_i^{(2)}\|_{L^{p}(\Omega)}\|\nabla\bar{u}_i\|_{L^2(\Omega)}\di{\tau}.
\]
Applying Hölder's inequality once more, we further estimate
\begin{equation}\label{eq:estimate_I1}
I_1
\leq L_D\alpha\sqrt{t}\|u_i^{(2)}\|_{L^2((0,t);W^{1,p}(\Omega))}\|\nabla \bar{u}_i\|_{L^2((0,t)\times\Omega)}\sum_{k=1}^N a_k\left(C_\delta\|\bar{u}_k\|_{L^{2}((0,t)\times\Omega)}+\delta\|\nabla\bar{u}_k\|_{L^{2}((0,t)\times\Omega))}\right).
\end{equation}
%
%
The reaction terms $R_i$ are locally Lipschitz continuous, $u_i^{(1)}$ and $u_i^{(2)}$ are essentially bounded (\cref{thm.existence}), and $A$ is uniformly bounded:
\begin{equation*}
I_2+I_4
\leq (L_R+Ca_i)\|\bar{u}_i\|_{L^2((0,t)\times\Omega)}^2+C b_i\int_0^t\int_\Omega|\bar{v}||\bar{u}_i|\di{x}\di{\tau}.
\end{equation*}
The second term on the right-hand side can be estimated the same as $I_1$ considering that $\bar{\sigma}=\alpha\bar{v}$, which leads to
\begin{equation}\label{eq:estimate_I2I4}
I_2+I_4
\leq (L_R+C a_i)\|\bar{u}_i\|_{L^2((0,t)\times\Omega)}^2+C b_i\sqrt{t}\|\bar{u}_i\|_{L^2((0,t)\times\Omega)}\sum_{k=1}^N a_k\|\bar{u}_k\|_{L^2((0,t)\times\Omega)}.
\end{equation}
For the remaining term, $I_3$, we use the Lipschitz continuity of $A$ (\cref{lemma:lipschitz_coefficient}) and \cref{eq:bar_sigma_estimate}:
\begin{equation}\label{eq:estimate_I3}
I_3
\leq L_A\alpha\sqrt{t}\left(a_i\|u_i^{(1)}\|_{L^\infty((0,t)\times\Omega))}+ b_i\|v^{(1)}\|_{L^\infty((0,t)\times\Omega)}\right)\|\bar{u}_i\|_{L^2((0,t)\times\Omega)}\sum_{k=1}^N a_k\|\bar{u}_k\|_{L^2((0,t)\times\Omega)}
\end{equation}
%
Using a generic constant $C$, we thereby get
%
%
\begin{multline*}
\|\bar{u}_i(t)\|_{L^2(\Omega)}^2+\|\nabla\bar{u}_i\|_{L^2((0,t)\times\Omega)}^2\\
\leq C\|\bar{u}_i\|_{L^2((0,t)\times\Omega)}^2
+C\bigg(\|\nabla\bar{u}_i\|_{L^2((0,t)\times\Omega)}\sum_{k=1}^N\left(C_\delta\|\bar{u}_k\|_{L^{2}((0,t)\times\Omega)}+\delta\|\nabla\bar{u}_k\|_{L^{2}((0,t)\times\Omega))}\right)\\
+\|\bar{u}_i\|_{L^2((0,t)\times\Omega)}\sum_{k=1}^N\|\bar{u}_k\|_{L^2((0,t)\times\Omega)}\bigg).
\end{multline*}
With Cauchy-Schwarz's and Young's inequalities, we can simplify this further 
\begin{equation*}
\|\bar{u}_i(t)\|_{L^2(\Omega)}^2+\|\nabla\bar{u}_i\|_{L^2((0,t)\times\Omega)}^2
\leq C\left(\|\bar{u}_i\|_{L^2((0,t)\times\Omega)}^2
+NC_\delta\|\bar{u}\|^2_{L^{2}((0,t)\times\Omega)^N}+N\delta\|\nabla\bar{u}\|^2_{L^{2}((0,t)\times\Omega)^N}\right).
\end{equation*}
Now, summing over $i=1,...,N$, taking the supremum over $S_T$, and subsuming the gradient term on the left hand side by choosing $\delta$ to be sufficiently small, we arrive at
\[
\|\bar{u}(t)\|_{L^2(\Omega)^N}^2+\|\nabla\bar{u}\|_{L^2(S_T\times\Omega)^N}^2\\
\leq C\|\bar{u}\|^2_{L^2(S_T\times\Omega)^N}
\]
which, via Grönwall's inequality, implies $\bar{u}=0$ almost everywhere in $S_T\times\Omega$.
\end{proof}

\begin{rem}
    In general, one cannot expect weak solutions to have this higher gradient integrability without further regularity assumptions on the data.
    In dimension $d=2$, however, it suffices to show $\nabla u\in L^2(S_T;L^{2+\e}(\Omega))$ for some arbitrarily small $\e>0$.
    This is guaranteed via Meyers-type estimates \cite{meyers_lp-estimate_1963} for parabolic systems (as in, e.g., \cite{tian_lorentz_2017}).

    For $d\ge3$, the required regularity can be established via maximal parabolic regularity provided that the initial condition $u_0=(u_{10},...,u_{N0})$ satisfies $u_0\in W^{1,p}(\Omega)^N$ together with the compatibility conditions $\nabla u_{i0}\cdot n=0$ almost everywhere on $\partial\Omega$; it then follows that $u\in L^2(S_T;W^{1,p}(\Omega))$ (\cite{amann_linear_1995}).
    Note that the forcing term belongs to $L^2(S_T;L^P(\Omega))$ since the solutions are uniformly bounded.
\end{rem}


\section{Numerical simulation of the two-scale quasilinear  problem 
}\label{numerics}
In this section, we illustrate numerically the behavior of the weak solution to our model for selected model parameters and particular choices of two-dimensional microscopic and macroscopic geometries. Essentially, we illustrate that our model is able to capture clogging effects due to the evolution of the moving boundaries of the inner microscopic cores. Other choices of two-dimensional geometries can be as well considered, if a certain real-world application would require it. More implementation work is though needed if the involved geometries are three-dimensional. Closely related studies which treat two-scale problems that treat space- and/or time-dependent  microstructures are \cite{Omar, EdenFreudenbergMuntean2025}.

To start with we make two important observations  in order to simplify the overall numerical treatment of the problem. The first refers to a change of variable of the kinetic condition that allow us to separate its solution, given now by the Eikonal equation, from the implicit entanglement of the problem while the second allow us to obtain  analytical solutions of the Eikonal equation.


\subsection{Exact solution}
 \subsubsection{Change of time variable}
 Let  $S_a$ be the initial position of
 the moving boundary,  {  applied to all the cells inside the material, assumed to be independent of $x$ while} for $t>0$ its position is given by the points $(y_1, y_2)$ for which we have $y_2=S(x,y_1,t)$ or more generally, as we can see in \eqref{EikSol}, in parametric form $(\alpha_0(s), \beta_0(s))$, $s\in [0,2\pi]$, a $C^1$-curve. We make this choice to keep the presentation simple\footnote{As a natural extension, we can include a dependence on the variable $x$ (e.g. by using a random distribution) and then take $S_a=S_a(x,y)$. However, such a choice would naturally lead to additional computational challenges that we wish to avoid at this stage.}.

  In order to simplify things and not to solve simultaneously the equations for $u$ and $S$, we propose to use an appropriate change of the time variable.

More specifically, we focus on the equation for $S=S(x,y_1,t)$, where
we have that the speed of the moving boundary $\mathcal{V}$ should be proportional to the rate of increase of the deposited colloidal species on the surface of the solid core of a cell.

\begin{eqnarray*}\label{eqS10}
\mathcal{V} = \frac{\partial S}{\partial t}\frac{1}{ \sqrt{1+ \left(\frac{\partial S}{\partial y_1}\right)^2}} =
\alpha_v \frac{\partial v}{\partial t},
 \,\, S(x,y_1,0)=S_a(y_1), \,\, 
\end{eqnarray*}
or
\begin{eqnarray}\label{eqS1}
\frac{\partial S}{\partial t} =
 \sqrt{1+ \left(\frac{\partial S}{\partial y_1}\right)^2}\alpha_v \sum_{i=1}^N\left(a_iu_i- b_iv\right),\\
\label{eqSa1}
 \,\, S(x,y_1,0)=S_a(y_1), \,\, 
\end{eqnarray}

In order to simplify the description, we set ${\partial \sigma}/{\partial \tau}=F_v(x,t)$
 \[F_v(x,t):=\alpha_v \sum_{i=1}^N\left(a_i u_i- b_i v\right)\]
 or

\begin{eqnarray} 
\sigma=\sigma(t)=\int_0^{t} F_v(x,t')dt'. \label{trnsf}
\end{eqnarray}

 Therefore, we can write for $S=S(y_1,\sigma)$,
 \bse
 \label{eqS3}
 \be  \label{eqS3a}
 \frac{\partial S}{\partial\sigma} =
 \sqrt{1+ \left(\frac{\partial S}{\partial y_1}\right)^2},\quad 0<y_1<1,\quad \sigma\geq 0,
 \ee
 \be
\label{eqS3b}
  S(y_1, 0)=S_a(y_1)=0.
\ee
 \ese
This is precisely the form of the Eikonal equation $\left({\partial S }/{\partial \sigma}\right)^2
-\left({\partial S }/{\partial y_1}\right)^2=1$, for $S=S(y_1,\sigma)$.


\subsubsection{Exact solution of the Eikonal equation}\label{SecEikSol}
We will consider the case  that the expanding solid core has a boundary which is a  closed curve  of
the form  $(\alpha(s), \beta(s))=(y_1(s), S(s))$. We consider the curve to be contained into a  unit square cell. 
Essentially, 
this construction corresponds to the following equation:  Find $S=S(y_1,\sigma)$ such that 
\begin{eqnarray}\label{eqSsq}
& \frac{\partial S}{\partial\sigma} =
 \sqrt{1+ \left(\frac{\partial S}{\partial y_1}\right)^2},\quad -1<y_1<1,\quad \sigma\geq 0,\\
&      S(y_1(s),0)=S_0 (y_1(s)),\quad  (y_1(s), \, S_0(s))=(\alpha(s), \beta(s)),  \quad   \sigma(s)=0,\quad     s\in [0.\,2\pi],
\end{eqnarray}
for $\alpha(s), \beta(s)$ given $C^1$ functions. The Cauchy data in this case is ${y_1}_0(s)$, $\sigma_0(s)=0$,
$S_0(s)=S(0,y_1(s))$.

The equation of motion \eqref{eqSsq} can be solved analytically following  the so-called Charpit's method,  as presented e.g. in \cite{HLMO} (see also \cite{Charpit} for a more recent reference). This is a methodology specific for guessing analytic representations of smooth solutions  to selected quasilinear first-order
partial differential equations. Concretely, we set $p:=S_{y_1}$ and $q:=S_{\sigma}$ and then the target equation \eqref{eqSsq}  takes the form $G(p,q)=q^2-p^2-1=0$. Here, we have $q>0$ since $q=\sqrt{1+p^2}$. The relevant corresponding  Charpit's equations are:
 \begin{eqnarray*}\label{eqCe}
& \dot{{y_1}}=\frac{\partial G}{\partial p},\\
& \dot{\sigma}=\frac{\partial G}{\partial q},\\
& \dot S =p\frac{\partial G}{\partial p}+q\frac{\partial G}{\partial q},\\
 & \dot p=- \frac{\partial G}{\partial y_1}-p \frac{\partial G}{\partial S}, \\
& \dot q=- \frac{\partial G}{\partial \sigma}-q \frac{\partial G}{\partial S},
\end{eqnarray*}
 where ``\,\,$\dot{}=\frac{d}{d \xi}$\,". 
 Finally, we get 
 \begin{eqnarray*}
 & \dot {y_1}=G_p,\\
 & \dot \sigma=G_q,\\
& \dot S=pG_p+qG_q,\\
 & \dot p=\dot q=0.
 \end{eqnarray*}
Thus it holds $\dot S=pG_p+qG_q=2q^2-2p^2=2$, while

  \[  p=p_0(\xi) ,\quad q=q_0(\xi).\]
 
 The ray equations are then 
 \begin{eqnarray*}
 y_1={y_1}_0+\xi\left.G_p\right|_0={y_1}_0-2p_0\xi,\\
 \sigma={\sigma}_0+\xi \left. G_q\right|_0=\sigma_0+2q_0\xi,\\
 S=S_0+\xi\left.\left(p G_p +q G_q \right)\right|_0=S_0+2\xi,
 \end{eqnarray*}
  given that
\begin{eqnarray*}
 G_p=-2p,\\
 \left.G_p\right|_0=-2p_0, \\
  G_q=2q,\\
 \left.G_q\right|_0=2q_0.
\end{eqnarray*}   
The initial curve  in parametric form reads 
\[   {(y_1}_0(s), \, S_0(s))=(\alpha_0(s), \beta_0(s)),  \quad   \sigma(s)=0,\quad     s\in [0,\,2\pi].\]  
Also, we have  
\begin{eqnarray*}
q_0^2-p_0^2=1,\\
\frac{d{S_0}}{ds}=\frac{d{{y_1}_0}}{ds}p_0+\frac{d\sigma_0}{ds},
\end{eqnarray*}
or
\begin{eqnarray*}
q_0^2-p_0^2=1,\\
\frac{d{S_0}}{ds}=\frac{d{{y_1}_0}}{ds}p_0.
\end{eqnarray*}
To fix ideas, we look at the case of an initial unit circle.  For this particular situation,  we can write:
 \[ -\sin(s)p_0=\cos(s), \quad p_0=-\cos(s)/\sin(s).\]
 Thus $q_0^2=1+p_0^2=1/\sin^2(s)$, 
 \[q_0= 1/\sin(s),\]  
 and
  \begin{eqnarray*}
&   \sigma=2\xi/\sin(s),\quad\mbox{or}\quad 2\xi=\sigma \sin(s),\\
& y_1=\cos(s)-2\xi (-\cos(s)/\sin(s))=\cos(s)+ (\cos(s)/\sin(s)) (\sigma\sin(s))=\cos(s)[1+\sigma]),\\
& S=\sin(s)+2\xi=\sin(s)+\sigma\sin(s)=\sin(s)[1+\sigma].
 \end{eqnarray*}
 Looking at  the general case, it holds
\begin{eqnarray*}
q_0^2=1+p_0^2,\\
p_0=\frac{S_0'}{{y_1}_0'},
\end{eqnarray*} 
which then gives { for $q_0={S_0}\sigma>0$} that
  \begin{eqnarray*}
   \sigma=2q_0\xi=2\xi \left( \sqrt{1+\left(\frac{S_0'}{{y_1}_0'}\right)^2} \right) ,\\
   2\xi=\sigma \frac{|{y_1}_0'|}{\sqrt{\left({y_1}_0'\right)^2 +\left(S_0'\right)^2}},\\
   \\
 y_1={y_1}_0 - \sigma\frac{|{y_1}_0'|}{{y_1}_0'} \frac{S_0'}{\sqrt{\left({y_1}_0'\right)^2 +\left(S_0'\right)^2}},\\
\\
 S=S_0 + \sigma \frac{|{y_1}_0'|}{\sqrt{\left({y_1}_0'\right)^2 +\left(S_0'\right)^2}},
 \end{eqnarray*}
 
 
 At each time $\sigma$, we obtain a curve in parametric form in terms of $\alpha$ and $\beta$, which is written explicitly as
  \begin{eqnarray}\label{EikSol}
&  
\left(\alpha(\sigma,s),\beta(\sigma,s)\right)=\left(\alpha_0(s)-\sigma \frac{|\alpha_0'(s)|}{\alpha_0'(s)} \frac{\beta_0'(s)}{\sqrt{\left(\alpha_0'(s)\right)^2 +\left(\beta_0'(s)\right)^2}} ,\,\,
\beta_0(s) + \sigma \frac{|\alpha_0'(s)|}{\sqrt{\left(\alpha_0'(s)\right)^2 +\left(\beta_0'(s)\right)^2}}\right),
\quad \quad\\
& s\in [0,\,2\pi].\nonumber
 \end{eqnarray}
 Now, we can  finally  substitute   \eqref{eqS1} and \eqref{eqSa1} with \eqref{trnsf} and \eqref{EikSol} (i.e. the  $(P_S)$ problem), which results in the presence of a moving interface of the form:
 \begin{empheq}{align*}
 \left(\alpha(\sigma,s),\beta(\sigma,s)\right)=\left(\alpha_0(s)+\sigma g_\alpha(s) ,\,\,
\beta_0(s) + \sigma g_\beta(s)\right), \\
\sigma=\sigma(t)=\int_0^{t} F_v(x,t')dt', \quad F_v(x,t)=\alpha_v \sum_{i=1}^N\left(a_i u_i- b_i v)\right),
\end{empheq}
with $g_\alpha(s):=-\frac{|\alpha_0'(s)|}{\alpha_0'(s)} \frac{\beta_0'(s)}{\sqrt{\left(\alpha_0'(s)\right)^2 +\left(\beta_0'(s)\right)^2}}$ and $g_\beta(s):=\frac{|\alpha_0'(s)|}{\sqrt{\left(\alpha_0'(s)\right)^2 +\left(\beta_0'(s)\right)^2}}$.

\subsection{Model equations and geometry}

In this section, we wish to solve numerically the macroscopic model problem
for the $i$-mers species concentration $u_i$ ($i=1,\dots,N$) and the mass density of the absorbed material $v$. The upscaled model equations are posed in a two-dimensional macroscopic environment and have direct access via cell problems to a two-dimensional microstructure environment as we will explain in this section. 
Primarily, we want to observe the basic qualitative behavior of the model towards clogging in a
two dimensional  macroscopic domain around typical geometrical singularities. To fix ideas, we discuss two cases of macroscopic environments:  a cardioid domain and an L-shaped domain.  

To focus our attention on physically relevant parameters choices, we use the setup described in \cite{johnson1995dynamics}; see also \cite{Krehel}, \cite{MC20} for more details.
Essentially, we look at a theoretical model describing the dynamics of colloid
deposition on collector surfaces,  when  both inter-particle and particle-surface electrostatic interactions   are assumed to be negligible.
The numerical range of the used parameters is adapted to the situations that can relate, amongst others, to the immobilization of bio-colloids in soils.

The simulation output we are looking for includes approximated space and time concentration profiles of colloidal populations, spatial distribution of microstructures for given time slices, and estimated amount of deposited colloidal mass. This information helps us detect in {\em a posteriori} way the locations in $\Omega$ where deposition-induced clogging is likely to happen.

Additionally, we want to investigate the effect that  certain choices of  microstructure settings and material geometries have upon the structure of the diffusion tensor (and of the corresponding time evolution of each entry in this tensor).

Recall that our model consists, to begin with, of equation  (\ref{overall-u1_in}) describing the diffusion of $u_i$ in the macroscopic domain $\Omega$.
The effective diffusion tensor has the form  
$$(D_i)_{jk}=d_i\phi(x,\sigma)\int_{Y\setminus \overline{Y}_c(t)}(\nabla w_{k}+e_{k})\cdot e_j\di{z}$$
    for all $i=1,\ldots,N$ and $j,k=1,2$.

 In addition, the length $L(x,t)=|\Gamma(x,t)|$ and area $V(x,t)=|Y^f (x,t)|$ (corresponding surface area and volume in a 3D setting) functions related to the motion of the boundary  are given by 
  \be 
  L=\int_{\mathcal{S}}d {\Gamma },\quad V=\left[1-\int_{Y^f{(t)}}  dy\right],\quad A=\frac{L}{V},
         \label{overall-aux}
 \ee
 while $R_i(u)$ is given by 
\be \label{mod1hb}
R_i(u):=\frac{1}{2}\sum_{j+l=i}\gamma_{jl}u_ju_l-u_i\sum_{j=1}^{N-i}\gamma_{ij}u_j,
\ee
 Moreover, the cell functions $w:=(w_1(x,\sigma,y),w_2(x,\sigma,y))$, assumed to have constant mean, satisfy
 problem $(P_w)$ i.e. equations \eqref{overall-w1_in}, \eqref{overall-w2_in}, \eqref{overall-w3_in},
 
The immobile species $v$ given initially by problem $(P_v)$ i.e. equations 
 \eqref{overall-v1_in}, \eqref{overall-v2_in}, can be tracked by the explicit relation
\begin{eqnarray}\label{alter-v}
	v(t,x)=e^{-bt}\left(v_0(x)+\sum_{i=1}^Na_i\int_0^te^{b\tau}u_i(\tau,x)\di{\tau}\right),
\end{eqnarray}
for a given $v_0(x)$.
 
 Note also that combining equations  \eqref{overall-v1_in},  \eqref{eqS1} and \eqref{trnsf} leads to
 \be \label{mod-sigmav}
{\partial \sigma}/{\partial \tau}=\alpha_v {\partial v}/{\partial \tau}.
\ee
 Consequently, given $\sigma(0)=0$, it results that $\sigma=a_v v(t,x)$.
 

We need to impose boundary conditions to the chosen two-dimensional macroscopic domain $\Omega$. We select non-homogeneous  Robin  boundary conditions as this is the most realistic choice  to simulate inflow through a material interface, namely 
 \be \label{mod1b}
 \frac{\partial u_i}{\partial n}(x_1,x_2,t)+ b_r u_i (x_1,x_2,t)=\left\{\begin{array}{cc}
 u_i^b(x_1,x_2)>0 & t\in [0,t_0],\\
 0         & t>t_0,
 \end{array}\right. \quad (x_1,x_2)\in \partial \Omega.
 \ee
 The parameter $t_0$ represents the upper time limit for which we allow inflow at the prescribed boundary. Depending on the simulation objectives, this time moment  can be identified with the total simulation time.

 
 Finally, we also need initial distributions for all the species $u_i$ and for $v$; they are given by \eqref{overall-u3_in} and \eqref{overall-v2_in}. Additionally,  the solution of our cell problems needs to capture the geometry of the expanding core given by \eqref{EikSol}, starting off from an initial choice of $(\alpha_0(s), \beta_0(s))$, which are required ingredients in the simulations that follow.

  
 

\subsection{Solution strategy and basic simulations}

To approximate numerically the problem posed in \cref{overall-u1_in,overall-u2_in,overall-u3_in,overall-w1_in,overall-w2_in,overall-w3_in,overall-v1_in,overall-v2_in,overall-S1_in,overall-S2_in}, with the additional equations \eqref{eqS1}, \eqref{eqSa1},\eqref{mod1b}, we follow the same solution strategy as proposed in \cite{EdenNikolopoulosMuntean22}. Essentially, we proceed as follows:

We first need to obtain a numerical approximation for the cell problems $(P_w)$, i.e., equations \eqref{overall-w1_in}, \eqref{overall-w2_in}, \eqref{overall-w3_in} and determine the shape of the corresponding cell functions $w_1,w_2$ posed in $\Omega\times S_{\sigma}\times  Y^s(\sigma)$ for $S_{\sigma}=[\sigma_0,\,T_\sigma]$. 

To do this, we also need knowledge of $\partial Y^s(\sigma)$, 
to identify the domain of solution $w_i$.
  More specifically, we proceed for a sequence of domains specified by the  parametric form of the inner boundary 
  $\partial Y^s(\sigma)$ given by equation \eqref{EikSol} for a range of the variable 
   $\sigma\in [\sigma_0,\,T_{\sigma}]$  with $max_{(\sigma, s)}\|S(\sigma,s)< 1/2\|$. In fact we consider 
  a partition of width $\delta \sigma$, $\sigma_0, \sigma_1=\sigma_0+\delta \sigma,\ldots, \sigma_{M}=T_{\sigma}$.
 We choose $T_{\sigma}$ in such a way so that $max_{(T_\sigma, s)} S(T_\sigma,s)=1/2-\epsilon$ for some $\epsilon\ll 1$ i.e. we want $S$ not to touch the outer boundary $\partial Y$ of the cell. 
 Thinking of \cite{Thomas}, we consider that, at the initial time of the overall process,  the interface $\partial Y^s(\sigma)$ is at least of class $C^{1,1}$ and tacitly assume that this boundary regularity does not worsen as time elapses (at least for a short while).
  
   Then since $Y^f(\sigma)= Y\setminus \bar{Y}_c(\sigma)$ is determined as the area contained inside the square cell and outside  the closed curve $S$,  we obtain a sequence of solutions for the cell problem $(P_w)$ for each 
$  Y^f(\sigma_i)$ corresponding to the variable $\sigma_i$ of the partition. 

 We  use a finite element scheme to solve these cell problems. 
    To be precise, we use  a solver in the open source finite element package ''\texttt{FreeFem++}" (see ~\cite{FreeFEM} for details) 
 implemented  to   triangulate the domain $Y^f(\sigma_i)$ and  to solve this specific problem (equations \cref{overall-w1_in,overall-w2_in,overall-w3_in}) with $y$-periodic boundary conditions in $\partial Y$; it works in a similar fashion as applied in \cite{EdenNikolopoulosMuntean22}  \cite{MC20}, \cite{CN18}.  The aforementioned finite element package is chosen  for its flexibility to handle  problems with periodic boundary conditions.


Having the solution $w_k$, $k=1,2$ of the cell problems we can calculate the diffusion tensor   from the relation
$(D_i)_{jk}=d_i\phi(x,\sigma)\int_{Y\setminus \overline{Y}_c(t)}(\nabla w_{k}+e_{k})\cdot e_j\di{z}$
    for all $i=1,\ldots,N$, $j,k=1,2$, and 
we can then tackle the macroscopic problem.  We use the finite element method to solve the  two-dimensional version of
the field equation \eqref{overall-u1_in}, \eqref{mod1hb} and \eqref{alter-v}
with its boundary and initial conditions, \cref{mod1b,overall-u3_in}.
For this purpose, we implement a finite element scheme in the MATLAB finite element  package ''\texttt{PDE Toolbox}"  (see \cite{pdetoolbox} for details)
making the most of its ability to handle 2D geometries and implement a solver for a complex  system of coupled equations. 



\subsection{Cell problems with $y$-periodic boundary}

In this section, we focus on approximating numerically the weak solution to the involved cell problems. To begin with,   we present   in  Figure \ref{FigCellper} the solution  of the cell problem assigned to the cell function   $w_1$ for the case when we have  $y$-periodic boundary conditions at the outer cell boundary for different inner core domains plotted in the panels (a)--(d).  In the panel (a), we take a circle with parametric equation ($(R_c\cos(s),\,R_c\sin(s)))$, $s\in[0,2\pi]$), $R_c=0.2$, in (b) we look at an ellipse ($(R_{a}\cos(s),\,R_b\sin(s)))$, $s\in[0,2\pi]$), $R_a=0.1$, $R_b=0.01$, rotated $150^o$ from the $x-$axis and with its axis ratio being $\frac{R_b}{R_a}=0.1$,  while in (c)  we see   the same ellipse with the same axis ratio but rotated $135^o$ from the $x-$axis. Finally, in the panel (d) we consider the  ellipse ($(R_a\cos(s),\,R_b\sin(s)))$, $s\in[0,2\pi]$) $R_a=0.4$, $R_b=0.2$ with axis ratio $0.5$. 
\begin{figure}[htb]
\begin{center}
\includegraphics[bb= 200 200 450 600, scale=.4]{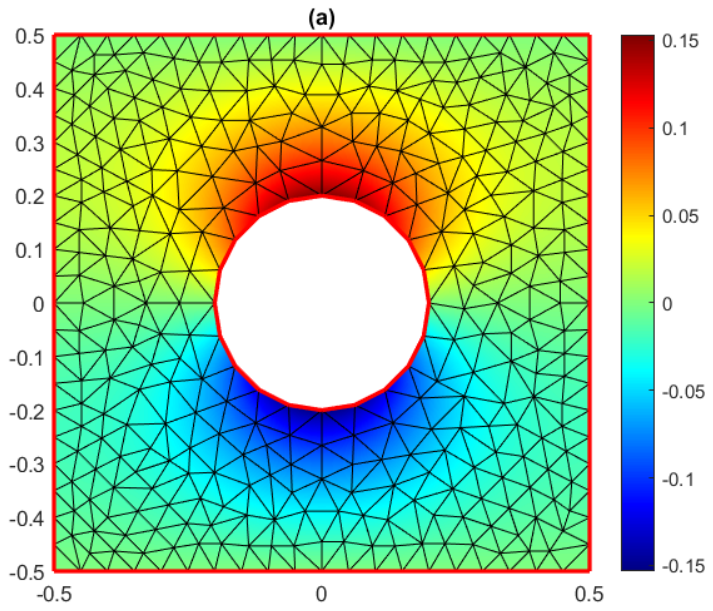}
\includegraphics[bb=0 200 450 600, scale=.4]{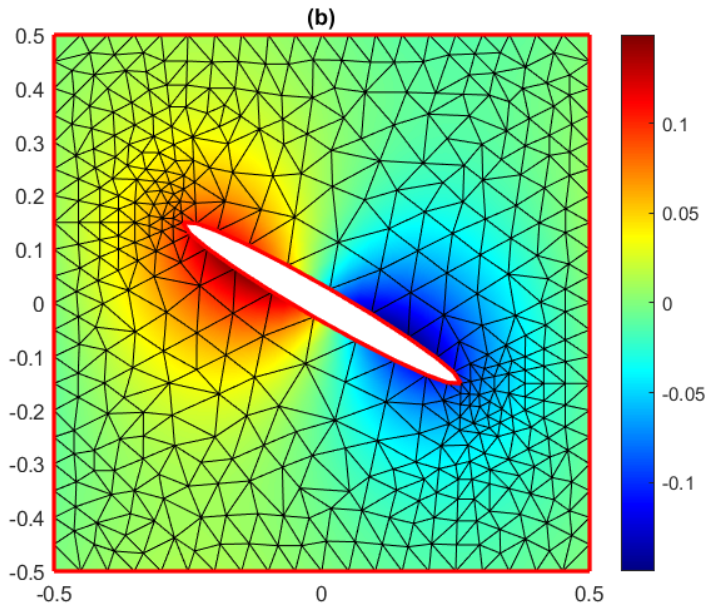}\\
\includegraphics[bb= 200 200 450 600, scale=.4]{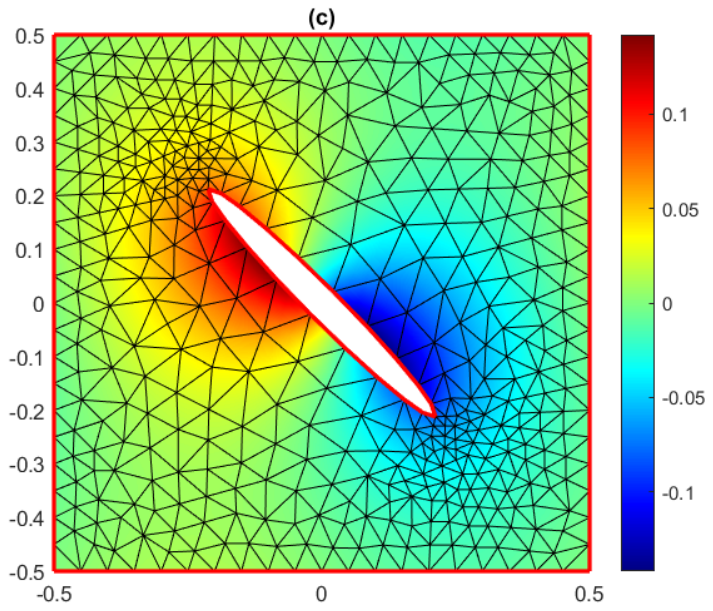}
\includegraphics[bb= 0 200 450 600, scale=.4]{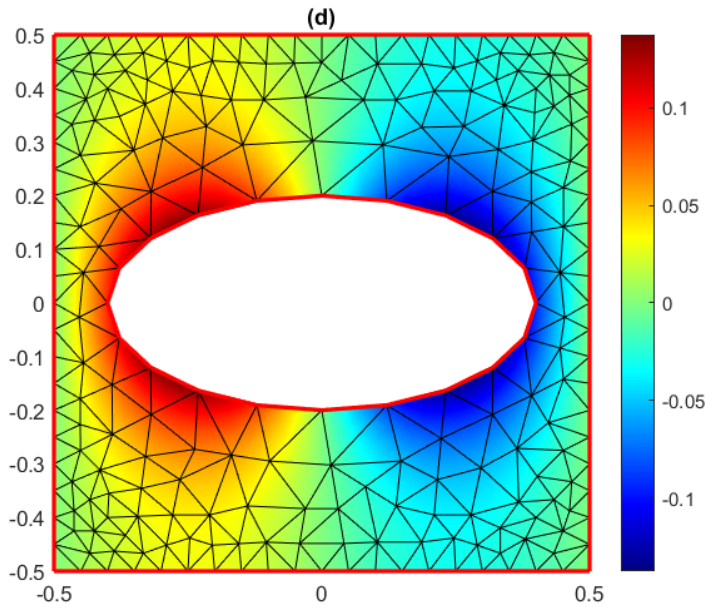}
\end{center}
\caption{\it Numerical solution of the cell problem (equations \cref{overall-w1_in,overall-w2_in,overall-w3_in}) and specifically for $w_1$  with $S$ being: (a) a circle; (b) an ellipse with its long axis forming a $150^o$ angle with the $x-axis$; (c) an ellipse with its long axis forming a $135^o$ angle with the x-axis; (d) an ellipse with axis ratio $0.5$ and with his long axis on the $x-$axis.}\label{FigCellper}
\end{figure}

In the following graphs shown  in  Figure \ref{FigDper}, we present the entries of the diffusion tensor plotted with respect to $\sigma$, of course after having solved previously the corresponding cell problems with $y$-periodic boundary  conditions posed for a similar variation of microscopic core shapes. Specifically, this is done for the case that the initial shape of the core inside the cell is (a) a circle; (b) an ellipse with its long axis forming a $30^o$ angle with the x-axis; 
(c) an ellipse with its long axis forming a $45^o$  
angle with the x-axis; and (d) a bean-shaped curve given by the parametric form $(\alpha(s), \beta(s))$, $s\in[0,\, \pi)$. To this end, we set
$$\alpha(s):=R_c\cos(s)(\cos^3(s)+\sin^3(s)), \beta(s):=R_c\sin(s)(\cos^3(s)+\sin^3(s)).$$ At the initial moment ($\sigma=0$), the ellipse axes are
$R_a(0)=0.01$ and $R_b(0)=0.001$ in the second and third cases, while for the case of the circle, the initial radius is $R_c(0)=0.001$. 
For the bean-shaped curves, the initial parameter $R_c$ is $R_c(0)=0.001$. A partition of $M_\sigma=60$ points is taken in the interval  $[0,T_\sigma]$ for $T_\sigma\, :\,\,\max_{(T_\sigma, s)} S(T_\sigma,s)=1/2-\epsilon$  for $\epsilon=10^{-3}$. 
\begin{figure}[htb]
\begin{center}
\includegraphics[bb= 200 200 450 600, scale=.4]{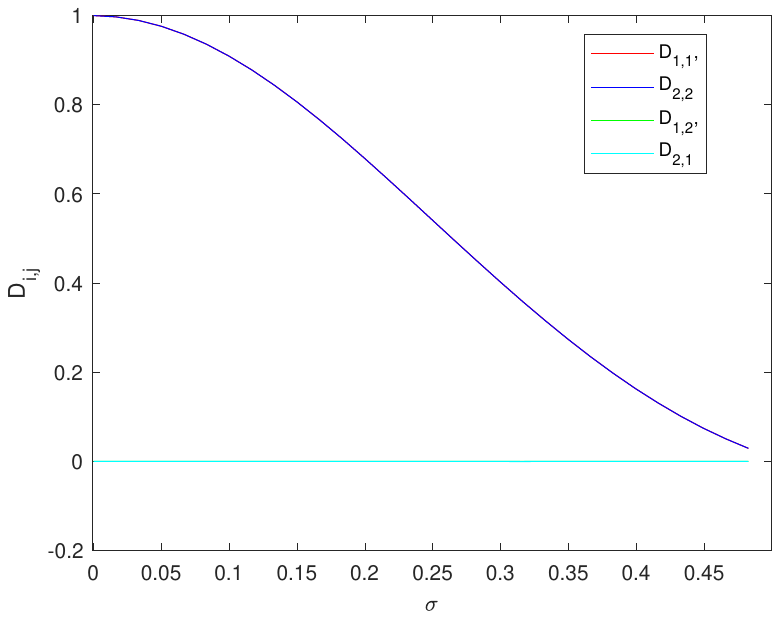}
\includegraphics[bb=0 200 450 600, scale=.4]{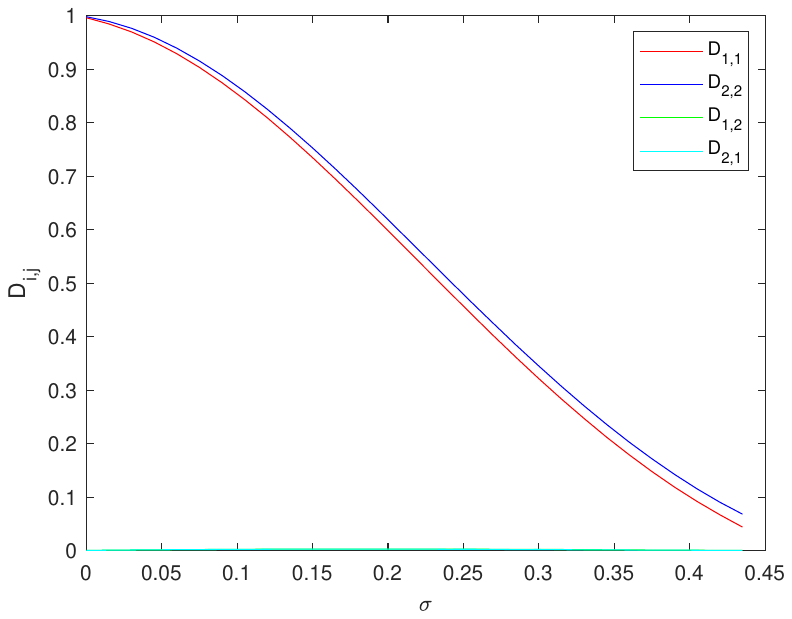}\\
\includegraphics[bb= 200 200 450 600, scale=.4]{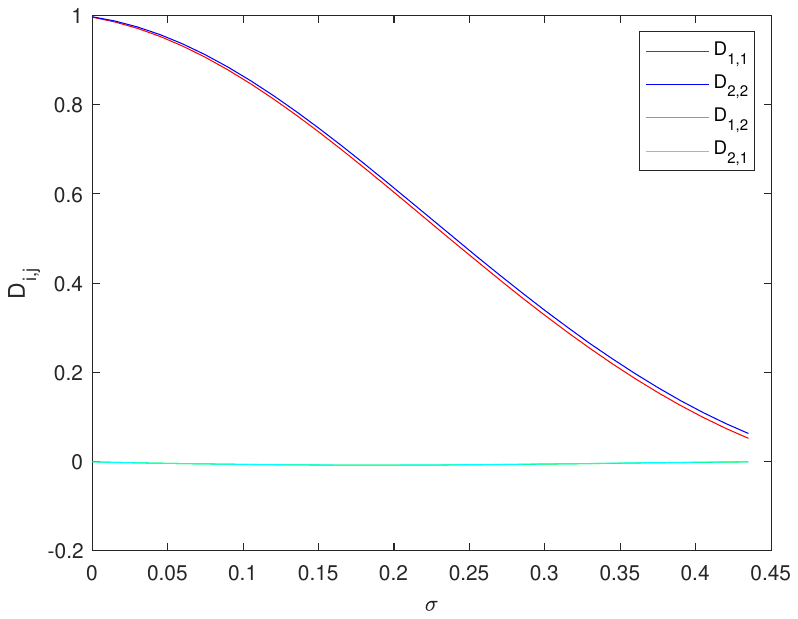}
\includegraphics[bb= 0 200 450 600, scale=.4]{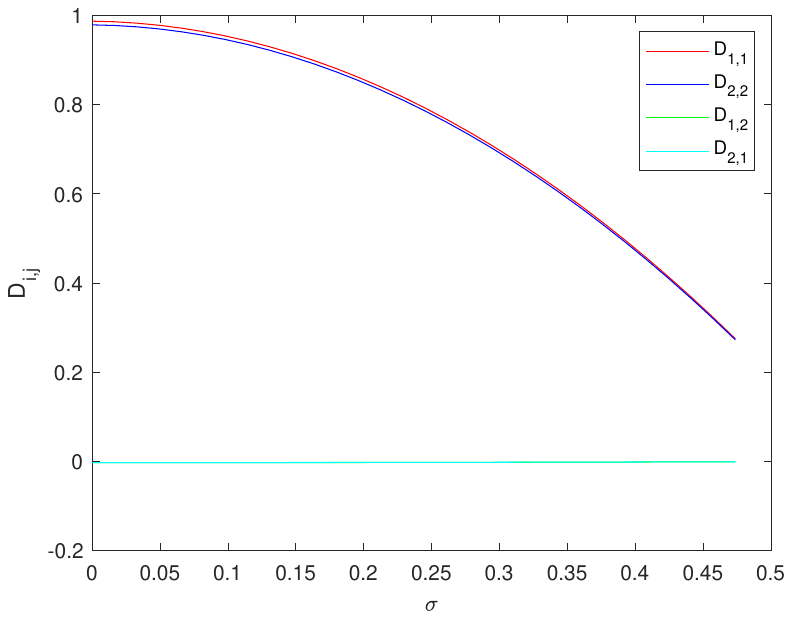}
\end{center}
\caption{\it Entries of the diffusion tensor with respect to the parameter $\sigma$ in the case that the initial shape is (a) a circle (b) an ellipse with its long axis forming a 30$^o$ angle with the x-axis, (c) an ellipse with its long axis forming a 45$^o$ angle with the x-axis and  (d) a bean shaped curve.}\label{FigDper}
\end{figure}

    As we can see, the significant entries of the diffusion tensor are the diagonal ones (related to ${w_1}_x$ and ${w_2}_y$) in all cases. Their values are almost identical, with the exception for the case of the rotated ellipse at a 30$^o$ from the $x-$axis where we capture a small difference between the diagonal entries. That difference may be due to the form of the domain, which in this case is not symmetrical along the axis and the diagonals of the square.
    Moreover, in the first three cases we observe a slightly sigmoidal shape of the curves while in the last case (of a bit more complex bean shape), we have  curves like $1-c \sigma^2$ for some $c>0$. 
\newline
Summarizing our main thoughts here, by making the initial choice of the microstructure geometry via the solution of the Eikonal equation combined with the numerical solution of the cell problems, we can precalculate the diffusion tensor needed for the numerical solution of the macroscopic problem. 


\subsection{Simulations for a cardioid macroscopic domain} 
We now focus on solving the macroscopic problem. 
We present a simulation for the macroscopic model and for the case that our domain is 
a cardioid
in order to capture the behavior of the clogging process close to the singular point.
We use a finite element method to solve the two-dimensional version of the field equations 
 \eqref{overall-u1_in} together with \eqref{mod1hb} and \eqref{alter-v} accompanied by appropriate boundary and initial conditions.

Note that here we consider Robin boundary conditions everywhere in the boundary of the domain,
non-homogeneous for species $u_1$ and homogeneous for the rest of the species. Specifically, we take in equation $\eqref{mod1b}$, for $u_i^b$, 
\bse
 \be \label{mod1b_L}
  u_i^b(x_1,x_2)=
  \left\{\begin{array}{cc}
   1 & \mbox{for}\, i=1,\\
  0         &  \mbox{for}\, i\neq 1,
 \end{array}\right., \quad (x_1, x_2)\in \partial \Omega,
 \ee
  \be
  u_i(0)=u_{i0}=0. \quad \text{in}\ \ \Omega.\label{overall-u3_L}
  \ee
\ese
In this way, we simulate a constant inflow of $u_1$ through the boundary of $\partial \Omega$.

We solve the macroscopic problem by implementing a finite element scheme in the MATLAB finite element  package ''\texttt{PDE Toolbox}"
The entries of the diffusion tensor, as previously discussed, are given by the solution of the cell problems with $y$-periodic boundary conditions in the case of a growing symmetric ellipse (see the third graph in Figure \ref{FigCellper}). Thus, the diffusion tensor can be used via interpolation in the macroscopic problem.

Moreover, we consider three species. Our model needs a quite large number of parameters and we take them as follows: $(d_1,\,d_2,\,d_3)=(1,\, 0.5,\, 0.9)$, $(a_1,\,a_2,\,a_3)=(.9,.9,.9)$, $(b_1,\,b_2,\,b_3)=(1,1,1)$,
$\gamma_{ij}=10$, $i,j=1,\ldots 3$, $b_r=1$.
 The shape of the macroscopic domain is given by the equation $( 2(1+\cos(s))\,\cos(s),\,2 (1+\cos(s))\sin(s))$, $s\in [0,\,2\pi]$.

 The result is presented initially in Figure \ref{FigMacroERea_Cardshape} where the
distribution of the long axis of the growing ellipses $R_a$ is plotted for four time frames. The simulation time interval is $[0,T]$ for $T=3$ and the solution is 
presented in $t=0.1,\,    0.85,\,    1.6,\,    2.35$.
\begin{figure}[htb]
\vspace*{-1cm}
\begin{center}
\includegraphics[bb= 330 230 250 600, scale=.7]{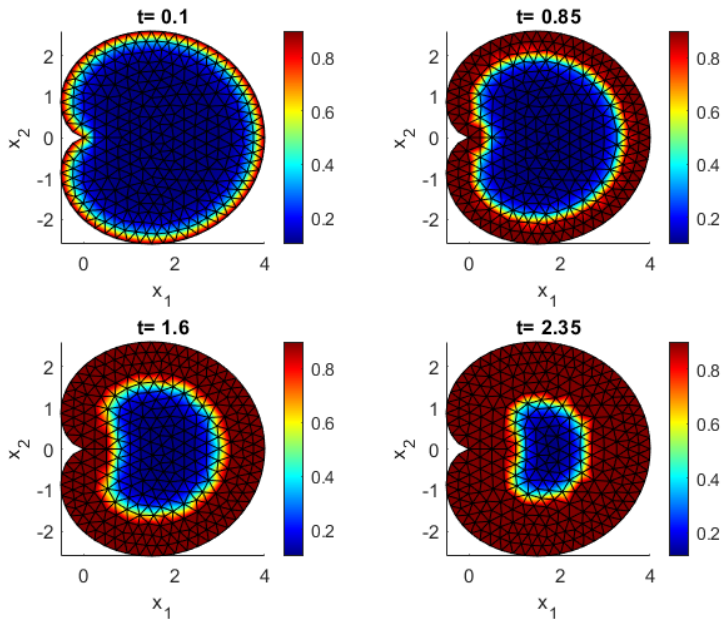}
 \vspace*{-1cm}
\end{center}
\caption{\it Time frames presenting the evolution of the long axis of the ellipse in the cell.}\label{FigMacroERea_Cardshape}
\end{figure}

Clogging is apparent as the distribution of $R_a$ increases and reaches its maximum value throughout the domain. At the end of the simulation, all the domain becomes clogged. 
In this case, the evolution of clogging seems to smooth out the singularity of the macroscopic domain. We notice that a similar behaviour occurs for other choices of macroscopic domains; see, for instance, the domain presented in the next paragraph.

\subsection{Simulations for an L-shaped macroscopic domain} 
A similar simulation is also presented for an $L-$shaped domain.
More specifically we consider an $L-$shaped set of the form
$\Omega =\left([-1,1]\times [-1,0]\right) \cup \left([0,1]\times [0,1]\right)$.

Using the same setting as before regarding the diffusion tensor and the same parameter values 
we obtain the result presented in Figure \ref{FigMacroEReaLsh}
Note that the same boundary and initial conditions,
\eqref{mod1b_L}  and \eqref{overall-u3_L} as in the  cardioid 
domain  simulation is used.
The long axis of the growing ellipses $R_a$ is plotted in four time frames. The simulation time interval is $[0,T]$ for 
$T=1.2$ and the solution is presented in $t=0.1,\,    0.4,\,    0.7,\,    1$.


 Again, as for the cardioid case, we can observe the clogging behaviour as the distribution of $R_a$ increases and reaches its maximum value throughout the domain while at the end of the simulation, all the domain is clogged. 
\begin{figure}[htb]
\begin{center}
\includegraphics[bb= 330 230 250 600, scale=.7]{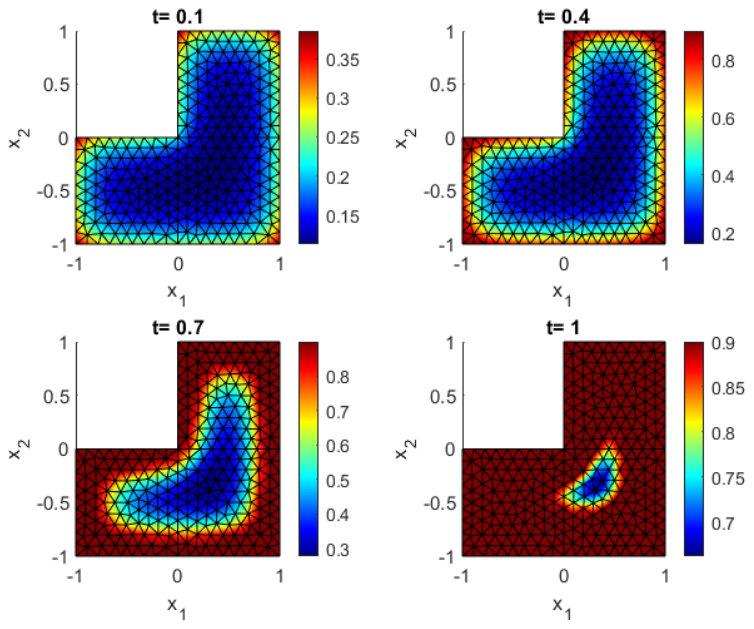}
 \vspace*{-1cm}
\end{center}
\caption{\it Time frames presenting the evolution of the long axis of the ellipse in the cell.}\label{FigMacroEReaLsh}
\end{figure}
We observe that the cells near the top bottom and left corners are filled faster and therefore move towards clogging than the inner corner at $(0,0)$. This can be seen especially in the first two graphs at $t=0.1$ and 
 $t=0.4$ but also in the third graph for $t=0.7$. The deposition process slows down in the presence of the concave corner.
\subsection{Simulations for an L-shaped macroscopic domain with non-uniform initial distribution} 
In the next simulation, we revisit  the  same $L-$shaped macroscopic domain but with  non uniform initial distribution of the elliptical core sizes in the microscopic domain.
Specifically, the distribution of the long axis of the growing ellipses $R_a$ at time $t=0$ is taken to have the form
$R_a=\frac12 \max (R0,\sin(\omega x_1 x_2))$ for $R_0=0.01,\,\omega=10$. In this way, we create a kind of "barrier" with fewer open pores  in the three square subdomains of the $L$-shaped domain,  as it is visible in the upper left frame in Figure \ref{FigMacroEReaLshNU}. We want to capture the effect of this non-uniformity in the appearance and evolution of the clogging. The rest of the setting and the parameters used are exactly the same as in the previous simulations.
\begin{figure}[htb]
\vspace*{-6cm}
\begin{center}
\includegraphics[bb= 330 230 250 600, scale=.7]{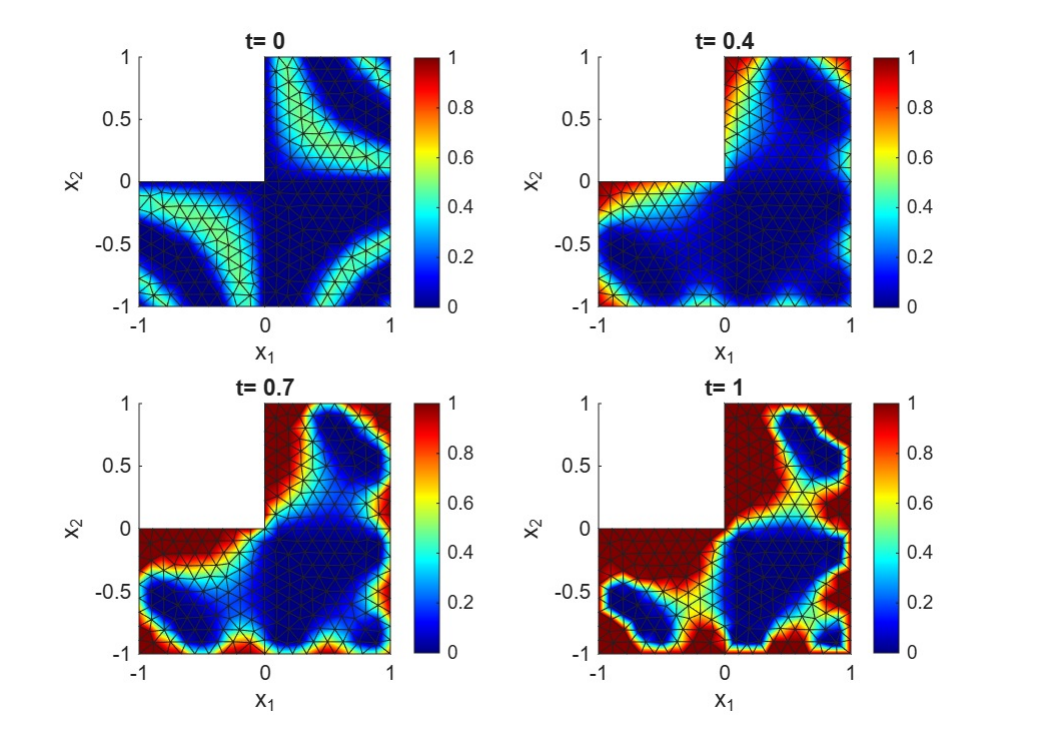}
 \vspace*{5cm}
\end{center}
\caption{\it Time frames presenting the evolution of the long axis of the ellipse in the cell in the case of an initial non-uniform distribution.}\label{FigMacroEReaLshNU}
\end{figure}
We observe that the inflow through the boundary creates clogging areas in front of the aforementioned "barriers" since we have increased accumulation of colloids  in these areas.
Additionally, the convex corners are   susceptible to clogging while the concave one is much less inclined to clog.


\section{Conclusion}\label{conclusion}
We presented a two-scale reaction-diffusion system modeling the deposition, fragmentation and agglomeration of populations colloidal particles in porous media. The shape of the microstructure of the media is allowed to change in both space and time coordinates. We establish conditions under which weak solutions to our model exist and are unique. Finally, we compute them numerically for selected model parameters and geometries. 

We point out that, benefitting of the structure of our model,  we can detect clogging effects by means of numerical simulations.
Specifically,  the significant entries of the diffusion tensor can be calculated through the numerical solution of the cell problems. This information can be used to simulate  the macroscopic process and to infer that the evolution of clogging seems to smooth out  singularities of the macroscopic domain and, moreover, that convex corners are more susceptible to clogging than the concave ones. Moreover we observe that inflow through the boundary creates clogging areas in front of regions with smaller porosity.

The numerical detection of transport defects can become of practical use for a large variety of applications  ranging from controlling drug-delivery tasks and improving membrane filtration of pollutant agents,  to guiding self-healing of concrete-based materials especially when clogging effects can be captured numerically in 3D. 
If such computations become feasible and efficient, it would be valuable to use 3D numerical simulations across a wide range of initial conditions and microstructural distributions. This would enable, for instance, a systematic investigation of carbonation-induced self-healing in Portland concrete modified with slag and fly ash—materials known to enhance durability, increase strength, reduce heat of hydration, and lower permeability. We plan to explore this scenario further in a follow-up project.

\section*{Acknowledgments}
AM thanks the Swedish Research Council (project nr. 2024-05606)  for financial support. 




\end{document}